\documentclass[11pt]{amsart}
\usepackage{amsmath,amsfonts,latexsym,amssymb,amsthm, verbatim}
\usepackage{url}
 \usepackage[OT2, T1]{fontenc}

\newtheorem{tm}{Theorem}
\newtheorem{proposition}[tm]{Proposition}
\newtheorem{lemma}[tm]{Lemma}
\newtheorem{corollary}[tm]{Corollary}

\theoremstyle{definition}
\newtheorem{remark}[tm]{Remark}

\newcommand{\Q}{\mathbb{Q}}

\newcommand{\Z}{\mathbb{Z}}

\newcommand{\F}{\mathbb{F}}

\DeclareMathOperator{\Aut}{Aut}

\DeclareMathOperator{\Gal}{Gal}
\DeclareMathOperator{\GL}{GL}

\DeclareMathOperator{\SL}{SL}

\def\diam#1{\langle#1\rangle}

\begin{document}

\title[Torsion of elliptic curves over cubic fields and points on $X_1(n)$]{Torsion of rational elliptic curves over cubic fields and sporadic points on $X_1(n)$}
\author{Filip Najman}
\address{Department of Mathematics\\ University of Zagreb\\ Bijeni\v cka cesta 30\\ 10000 Zagreb\\ Croatia}
\email{fnajman@math.hr}
\thanks{The author was supported by the Ministry of Science, Education, and Sports, Republic of Croatia, grant 037-0372781-2821.}
\keywords{Elliptic curves, torsion subgroups, cubic fields, modular curves}
\subjclass[2000]{11G05, 11G18, 14G25}
\begin{abstract}
We classify the possible torsion structures of rational elliptic curves over cubic fields. Along the way we find a previously unknown torsion structure over a cubic field, $\Z /21 \Z$, which corresponds to a sporadic point on $X_1(21)$ of degree 3, which is the lowest possible degree of a sporadic point on a modular curve $X_1(n)$.
\end{abstract}
\maketitle

\section{Introduction}

When trying to understand elliptic curves over number fields, an important problem is to classify the possible torsion structures. The first such classification was done by Mazur \cite{maz1,maz2}, proving that the torsion of an elliptic curve over $\Q$ has to be isomorphic to one of the following 15 groups:
\begin{equation}
\vcenter{\openup\jot\halign{$\hfil#\hfil$\cr
\Z/m \Z, 1 \leq m\leq 12,\ m\neq 11,\cr
\Z/2 \Z \oplus \Z/2m \Z,\ 1 \leq m\leq 4.\cr}}
\label{rac}
\end{equation}
After this result, attention shifted toward number fields. Kamienny \cite[Theorem 3.1]{Kam1} proved that if a torsion point of an elliptic curve over a quadratic field has prime order $p$, then $p\leq 13$. This, when combined with a theorem of Kenku and Momose \cite[Theorem (0.1).]{km}, gave a complete list of possible torsion structures over quadratic fields:
\begin{equation}
\vcenter{\openup\jot\halign{$\hfil#\hfil$\cr
\Z/m \Z, 1 \leq m\leq 18,\  m\neq 17,\cr
\Z/2 \Z \oplus \Z/2m \Z,\ 1 \leq m\leq 6,\cr
\Z/3 \Z \oplus \Z/3m \Z,\ m=1,2,\cr
\Z/4 \Z \oplus \Z/4 \Z.\cr}}
\label{kvad}
\end{equation}
The author gave a similar complete list for the fields $\Q(i)$ and $\Q(\sqrt{-3})$ (see \cite[Theorem 2.]{naj0}), and a procedure how to make such a list was developed by Kamienny and the author \cite{kn}.

As one can see, much is known about the possible torsion structures of elliptic curves over $\Q$ and over quadratic fields. Unfortunately, already over cubic fields a classification of possible torsion structures of elliptic curves is not known. However, it is known that if an elliptic curve over a cubic field has a point of prime order $p$, then $p\leq 13$ (see \cite{Par1, Par2}).  Jeon, Kim and Schweizer \cite[Theorem 3.4.]{jks} found all the torsion structures that appear infinitely often as one runs through all elliptic curves over all cubic fields:
\begin{equation}
\vcenter{\openup\jot\halign{$\hfil#\hfil$\cr
\Z/m \Z, 1 \leq m\leq 20,\ m\neq 17,19,\cr
\Z/2 \Z \oplus \Z/2m \Z,\ 1 \leq m\leq 7.\cr}}
\label{kub}
\end{equation}
Jeon, Kim and Lee \cite{jkl} constructed infinite families having each of the torsion structures from the list (\ref{kub}).

However, it was unknown whether the list (\ref{kub}) is complete, i.e. if one runs through all elliptic curves over all cubic fields, do there exist torsion structures that appear only finitely many times? We show, by finding an elliptic curve with torsion $\Z /21 \Z$ over a cubic field, that the answer is yes and that the list (\ref{kub}) is not the complete list of possible torsion structures over cubic fields. Note that this, in contrast with what happens over $\Q$ and over quadratic fields, where each group that can appear at all, is the torsion group of infinitely many non-isomorphic elliptic curves.

The main purpose of this paper is to study the possible torsion structures of all rational elliptic curves (meaning that all their coefficients are $\Q$-rational) over all cubic fields. This is a natural question to consider as, apart from being interesting in itself, it is often important to study rational elliptic curves over extensions of $\Q$ when solving Diophantine equations (see for example \cite{bru}). Somewhat similar problems were studied by Fujita \cite{fuj2}, who studied the possible torsion groups of rational elliptic curves over the compositum of all quadratic fields, by Lozano-Robledo \cite{alr}, who studied the minimal degree of the field of definition of points of order $p$ on rational elliptic curves and by Gonz\'alez-Jim\'enez and Tornero \cite{gt}, who studied how can the torsion of a rational elliptic curve grow upon base changing to a quadratic field.

The main result of this paper is the following theorem.
\begin{tm}
Let $E/\Q$ be a rational elliptic curve, and let $K/ \Q$ be a cubic extension. Then $E(K)_{tors}$ is one of the following groups
\label{gltm}
\begin{equation}
\vcenter{\openup\jot\halign{$\hfil#\hfil$\cr
\Z/m \Z, \text{ }m=1,\ldots,10, 12,13,14,18,21,\cr
\Z/2 \Z \oplus \Z/2m \Z, \text{ } m=1,2,3,4,7.\cr}}
\label{lista}
\end{equation}
The elliptic curve $162B1$ over $\Q(\zeta_9)^+$ is the unique rational elliptic curve over a cubic field with torsion $\Z/ 21\Z$.
For all the other groups $T$ in the list \eqref{lista}, there exists infinitely many rational elliptic curves that have torsion $T$ over some cubic field.
\end{tm}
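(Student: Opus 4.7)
The plan is to combine Mazur's classification of rational torsion \eqref{rac}, Mazur's theorem on $\Q$-rational cyclic isogeny degrees for non-CM elliptic curves over $\Q$, and a Galois-orbit analysis tailored to base change along a cubic extension. If $K/\Q$ is cubic with Galois closure $\tilde{K}$, then $G := \Gal(\tilde{K}/\Q)$ is either $\Z/3\Z$ or $S_3$, and for any $P \in E(K)_{\tors}$ of order $n$ the stabilizer of $P$ in $G$ contains the index-$3$ subgroup $\Gal(\tilde{K}/K)$, so the $G$-orbit of $P$ has size $1$ or $3$. Orbit size $1$ places $P$ in $E(\Q)$, hence in Mazur's list. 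Orbit size $3$ produces a $G$-stable subgroup $H := \langle P, \sigma P, \sigma^2 P\rangle \subseteq E[n]$ that strictly contains $\langle P\rangle$, unless $\langle P\rangle$ itself is $G$-stable, in which case $E$ admits a $\Q$-rational cyclic $n$-isogeny. The Weil pairing inclusion $\mu_m \subseteq \Q(E[m])$ together with Mazur's isogeny theorem then sharply constrain the possibilities.

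The completeness direction consists of ruling out, for rational $E$, each group in \eqref{kub} that is absent from \eqref{lista}, namely $\Z/11\Z$, $\Z/15\Z$, $\Z/16\Z$, $\Z/20\Z$, $\Z/2\Z \oplus \Z/10\Z$, and $\Z/2\Z \oplus \Z/12\Z$. For each candidate $T$, the orbit/isogeny dichotomy above reduces the question to $E$ carrying a specific rational cyclic isogeny pattern; Mazur's isogeny theorem leaves only finitely many $j$-invariants supporting each such pattern, and a finite twist-by-twist check, combined with Parent's bound \cite{Par1,Par2} on prime-order torsion over cubic fields, disposes of each case.

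The main obstacle is the new entry $\Z/21\Z$. Here the orbit analysis constrains $|H| \in \{21, 63, 147, 441\}$. The cases $|H| = 147$ and $|H| = 441$ force $E[7] \subseteq E(\tilde{K})$, whence $\Q(\mu_7) \subseteq \tilde{K}$; this is impossible because $\Q(\mu_7)/\Q$ is cyclic of order $6$ while $G \in \{\Z/3\Z, S_3\}$ admits no cyclic quotient of order $6$. The case $|H| = 63$ is the most delicate: it gives $\Q(\sqrt{-3}) \subseteq \tilde{K}$ (forcing $G = S_3$) together with a $\Q$-rational $7$-isogeny on $E$, and a direct analysis of the $X_0(7)$-parametrized families, paired with the mod-$3$ representation, shows this configuration does not produce $\Z/21\Z$ torsion over a cubic $K$. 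We are left with $|H| = 21$: $E$ carries a $\Q$-rational cyclic $21$-isogeny, so $j(E) \in X_0(21)(\Q)$. Since $X_0(21)$ is an elliptic curve of rank zero, there are only finitely many $\overline{\Q}$-isomorphism classes of such $E$. For each, I would enumerate the quadratic twists and analyze the character $G_\Q \to (\Z/21\Z)^{\times} \cong \Z/2\Z \oplus \Z/6\Z$ describing the Galois action on the isogeny kernel, selecting those twists whose fixed field is cubic. This finite search is the computational heart of the theorem and isolates the unique pair $(162B1, \Q(\zeta_9)^+)$.

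Finally, for each $T$ in \eqref{lista} other than $\Z/21\Z$ I would exhibit a one-parameter family of rational elliptic curves realizing $T$ over a cubic field. For each such $T$ one picks a rational family $E_t$ with appropriate $E_t(\Q)_{\tors}$ and, if needed, a rational cyclic isogeny whose kernel becomes pointwise defined over a cubic field $K_t$ whose coefficients depend algebraically on $t$. Specialization produces infinitely many pairwise non-isomorphic rational $E$ with $T \subseteq E(K)_{\tors}$ for some cubic $K$, and the completeness argument promotes the containment to equality.
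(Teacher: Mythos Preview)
Your orbit/isogeny dichotomy is sound for torsion of order coprime to $6$---it is essentially what the paper proves in Lemmas~\ref{izogenije} and~\ref{galizogenije}---but there is a genuine gap in the completeness argument. You propose to rule out only the groups in the Jeon--Kim--Schweizer list \eqref{kub} that are absent from \eqref{lista}, but \eqref{kub} is \emph{not} an a priori upper bound on $E(K)_{\tors}$: it is merely the list of groups occurring for infinitely many pairs $(E,K)$, and indeed $\Z/21\Z$ itself lies outside it. Beyond Parent's bound $p\le 13$ on prime orders, nothing you have said prevents orders such as $24$, $25$, $26$, $27$, $28$, $36$, $39$, $49$, $169$, or the group $\Z/2\Z\oplus\Z/18\Z$, none of which are in \eqref{kub}. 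The paper devotes most of Sections~4 and~5 to these exclusions, with separate and sometimes delicate arguments for the $2$-primary part (Lemma~\ref{2-syl}, Propositions~\ref{2-stvaranje} and~\ref{2-stvaranje2}, Corollary~\ref{nema4}) and the $3$-primary part (Lemma~\ref{nova3tor}, Proposition~\ref{9izogenija}); your orbit framework, as stated, does not substitute for these.

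Two smaller points. In your $|H|=63$ case for $21$-torsion, the conclusion that this configuration ``does not produce $\Z/21\Z$ torsion over a cubic $K$'' is wrong as phrased: $E[3]\subset E(\tilde K)$ forces a $3$-torsion point over the quadratic subfield $M$ (a $\Z/3\Z$-action on $\F_3^2$ always has a nonzero fixed vector), hence by Lemma~\ref{spust} a rational $3$-isogeny on $E$, which combined with the rational $7$-isogeny gives a rational $21$-isogeny---so this case feeds into your $|H|=21$ analysis rather than being excluded outright. For the infinitude of $\Z/13\Z$, your sketch underestimates the difficulty: no rational $E$ has a $\Q$-rational $13$-torsion point, and the kernel of a rational $13$-isogeny is generically defined over a degree-$12$ extension, not a cubic one. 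The paper instead exploits the genus-$0$ intermediate modular curve $X_\Delta(13)$ for $\Delta=\{\pm1,\pm3,\pm4\}\subset(\Z/13\Z)^\times$, together with a $\Q$-curve argument to show the relevant twist is itself rational.
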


To prove Theorem \ref{gltm}, we will first need to solve the analogous problem for quadratic fields, which we do in Section \ref{sec:kvad}.

We prove Theorem \ref{gltm} by studying the action of the Galois group on the torsion points, division polynomials, and by finding the rational points on certain (modular) curves.

Let us mention that a somewhat similar problem to the one considered in this paper is the problem of finding the possible torsion structures of rational elliptic curves with integral $j$-invariant \cite{msz,pwz} and with complex multiplication over number fields \cite{diu,cla}.

Recall that the $\emph{gonality}$ $\gamma(X)$ of an algebraic curve $X$ is the lowest degree of a nonconstant rational map from $X$ to the projective line. We call points of degree $d$ on the modular curves $Y_1(m,n)$ (see Section 2 for definitions of modular curves and note that we only consider modular curves with $m=1,2$ in this paper), when $d<\gamma(Y_1(m,n))$ \emph{sporadic}. Since all the modular curves $Y_1(m,n)$ that correspond to the torsion structures in the list(\ref{rac}) are of genus 0 and have (infinitely many) rational points (since some of the cusps of $X_1(m,n)$ are rational) and hence are of gonality 1. Similarly, all the modular curves $Y_1(m,n)$ that correspond to the torsion structures in the list (\ref{kvad}) are of genus $\leq 2$ (and hence have gonality 1 or 2), so it follows that there are no sporadic points of degree 1 or 2. Van Hoeij \cite{hoe} found sporadic points of degree $6$ on $X_1(37)$ (of gonality 18), and of degree 9 on $X_1(29)$ and $X_1(31)$ (of gonality 11 and 12, respectively).

Since the modular curve $X_1(21)$ has gonality $4$, the unique rational elliptic curve with $21$-torsion over a cubic field gives us a degree 3 sporadic point, which is the lowest degree possible. By the method used to construct this point we rediscover van Hoeij's degree 6 point on $X_1(37)$.

\section{Conventions and notation}

Throughout this paper, $K$ will be a cubic field and $L$ will be its normal closure over $\Q$. This means that when $K/ \Q$ is normal, then $L=K$, and otherwise $L$ is a degree 6 extension of $\Q$ such that $\Gal(L/ \Q)\simeq S_3$. We denote by $M$ the unique field with the property that $M$ is a subfield of $L$ such that $[L:M]=3$ (from which it follows that $\Gal(L/ M)\simeq \Z / 3 \Z$). If $K$ is normal over $\Q$, this will mean that $M= \Q$.

Let $E[n]=\{ P\in E(\overline \Q )| nP=0 \}$ denote the $n$-th division group of $E$ over $\overline \Q$ and let $\Q(E[n])$ be the $n$-th division field of $E$.

We will denote by $E^d$ a quadratic twist of $E$ by $d \in \Q^* / (\Q^*)^2$. By $\zeta_n$ we will denote a $n$th primitive root of unity and by $\Q(\zeta_n)^+$ the maximal real subfield of $\Q(\zeta_n)$.

For $n$ an odd positive integer, we denote by $\psi_n$ the $n$-th division polynomial of an elliptic curve $E$ (see \cite[Section 3.2]{was} for details), which satisfies that, for a point $P\in E$, $\psi_n(x(P))=0$ if and only if $nP=0$. As before, although the division polynomial depends on the curve $E$, we will leave $E$ out of the index as it will be clear what elliptic curve we are referring to.

Let $E/ \Q$ be an elliptic curve and $d>1$ be an integer.  Factor $\psi_n$ and then consider all factors of degree $l$, where $l|d$. Each of these factors generates a field $F$ over which the $x$-coordinate of a point $P$ such that $nP=0$ is defined. The torsion point $P$ is then defined over $F'$ which is either $F$ itself or a quadratic extension of $F$ obtained by adjoining the $y$-coordinate of $P$. By considering all such fields $F'$ of degree dividing $d$, we can check whether a fixed rational elliptic curve $E$ obtains $n$-torsion over some extension of degree $d$. We call this method the \emph{division polynomial method}.

The division polynomial method can also be effectively used in determining which, if any, twists of a given curve $E/F$ have non-trivial $n$-torsion over the base field. This is done by finding all the quadratic extensions $F(\sqrt d)$ in which the $n$-torsion grows and then computing for which of the finitely many $d$ obtained does the curve $E^d(F)$ have non-trivial $n$-torsion.

If there exists a $K$-rational cyclic isogeny $\phi:E\rightarrow E'$ of degree $n$, this implies that $\ker \phi$ is $\Gal(\overline K / K)$-invariant cyclic group of order $n$ and we will say that $E/K$ has an $n$-isogeny.

When counting rational elliptic curves, unless stated otherwise, we will count up to $\Q$-isomorphism. When referring to specific elliptic curves we will list them as they appear in Cremona's tables \cite{cre}, as we already did in Theorem \ref{gltm}.

Let $m|n$ and denote by $Y_1(m,n)$ the affine modular curve whose $K$-rational points classify isomorphism classes of the triples $(E, P_m, P_n)$, where $E$ is an elliptic curve (over $K$) and $P_m$ and $P_n$ are torsion points (over $K$) which generate a subgroup isomorphic to $\Z/m \Z \oplus \Z/n \Z$. For simplicity, we write $Y_1(n)$ instead of $Y_1(1,n)$. Let $X_1(m,n)$ be the compactification of the curve $Y_1(m,n)$ obtained by adjoining its cusps.

Denote by $Y_0(n)$ the affine curve whose $K$-rational points classify isomorphism classes of pairs $(E,C)$, where $E/K$ is an elliptic curve and $C$ is a cyclic (Gal$(\overline K/K)$-invariant) subgroup of $E$. Let $X_0(n)$ be the compactification of $Y_0(n)$.

Recall that a \emph{$\Q$-curve} is an elliptic curve $E/K$ over a number field, such that it is $\overline \Q$-isogenuos to all of its $\Gal(\overline \Q/ \Q)$-conjugates.

For computations we use Magma \cite{mag}.

\section{Torsion of rational elliptic curves over quadratic fields}
\label{sec:kvad}
The results of this short section will be needed in the proof of Theorem \ref{gltm}, but are also interesting in their own right. They also provide a nice introductory exercise for the much harder cubic fields case.

\begin{tm}
Let $E$ be a rational elliptic curve and $F$ a quadratic field.
\begin{itemize}
\item[a)] The torsion of $E(F)$ is isomorphic to one of the following groups
\begin{equation}
\vcenter{\openup\jot\halign{$\hfil#\hfil$\cr
\Z/m \Z,\text{ }m=1,\ldots ,10,12,15,16\cr
\Z/2 \Z \oplus \Z/2m \Z, \text{ } 1\leq m \leq 6.\cr
\Z/3 \Z \oplus \Z/3m \Z, \text{ }  m=1,2,\cr
\Z/4 \Z \oplus \Z/4 \Z.\cr}}
\label{kvadlista}
\end{equation}
\item[b)] Each of these groups, except for $\Z/15 \Z$, appears as the torsion structure over a quadratic field for infinitely many rational elliptic curves $E$.
\item[c)] The elliptic curves $50B1$ and $50A3$ have $15$-torsion over $\Q(\sqrt 5)$, $50B2$ and $450B4$ have $15$-torsion over $\Q(\sqrt{-15} )$. These are the only rational curves having non-trivial $15$-torsion over any quadratic field.
\end{itemize}
\label{kvkl}
\end{tm}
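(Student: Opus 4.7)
The plan is to combine the Kenku--Momose--Kamienny classification \eqref{kvad} with a systematic use of quadratic twisting. Writing $F=\Q(\sqrt d)$ and letting $\sigma$ generate $\Gal(F/\Q)$, the $\sigma$-action on $E(F)[n]$ can be diagonalised into its $\pm 1$-eigenspaces whenever $n$ is odd, producing a canonical isomorphism $E(F)[n]\cong E(\Q)[n]\oplus E^d(\Q)[n]$. Since both $E$ and $E^d$ are rational elliptic curves, Mazur's theorem \eqref{rac} controls each summand. This decomposition is the backbone of part (a).

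Under it, the only groups in \eqref{kvad} that remain to be excluded from \eqref{kvadlista} are $\Z/11\Z$, $\Z/13\Z$, $\Z/14\Z$, and $\Z/18\Z$. The first two are immediate: neither $11$ nor $13$ divides the order of any rational torsion group, so they cannot appear in either summand. For $\Z/14\Z$, suppose $\Z/14\Z\subseteq E(F)_{tors}$. The odd part forces one of $E(\Q)_{tors}$ or $E^d(\Q)_{tors}$ to contain a point of order $7$, and Mazur then pins that torsion down to $\Z/7\Z$ exactly; in particular $E(\Q)[2]=0$. Because $E$ and $E^d$ differ only by an $x$-rescaling, they share the same $2$-division polynomial up to that substitution, so $E^d(\Q)[2]=0$ too, and the cubic $2$-division polynomial of $E$ is irreducible over $\Q$. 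An irreducible cubic has no root in any quadratic extension of $\Q$, so $E(F)[2]=0$, contradicting the existence of a point of order $14$. The identical argument with $7$ replaced by $9$ rules out $\Z/18\Z$.

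For part (b), each surviving group $T$ will be realised by an explicit one-parameter family of rational elliptic curves paired with a compatible quadratic extension: the Mazur groups are inherited from $\Q$, and the genuinely new ones such as $\Z/16\Z$, $\Z/4\Z\oplus\Z/4\Z$, $\Z/3\Z\oplus\Z/3\Z$, and $\Z/3\Z\oplus\Z/6\Z$ are obtained by twisting universal curves with maximal $\Q$-rational torsion by discriminants that split an extra $2$- or $3$-torsion point. In each case the division polynomial method introduced in Section~2 verifies that the family is nontrivial in $j$-invariant and therefore yields infinitely many non-isomorphic $E$.

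The core of the theorem is part (c). Assuming $\Z/15\Z\subseteq E(F)_{tors}$, the decomposition gives, after swapping $E$ and $E^d$ if necessary, $E(\Q)_{tors}\supseteq\Z/5\Z$ and $E^d(\Q)_{tors}\supseteq\Z/3\Z$. Parametrising the rational curves with a $5$-torsion point by the rational function field of $X_1(5)$, and letting $E_t$ denote the universal such curve, the $3$-division polynomial $\psi_3^{E_t}$ is an explicit quartic in $x$ with coefficients in $\Q(t)$. A rational root $\alpha$ of $\psi_3^{E_t}$ gives the $x$-coordinate of a $3$-torsion point on some quadratic twist $E_t^d$; evaluating the right-hand side of the Weierstrass equation at $\alpha$ determines $d$ modulo squares. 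The affine curve cut out by $\psi_3^{E_t}(\alpha)=0$ in $\mathbb{A}^2_{\Q}$ is birational to a twist of the genus-one modular curve $X_1(15)$, whose Mordell--Weil group is finite and explicitly computable. The main obstacle is to carry out that computation and show that, after discarding cuspidal contributions, exactly four non-cuspidal points remain, yielding precisely $50B1$ and $50A3$ over $\Q(\sqrt 5)$ and $50B2$ and $450B4$ over $\Q(\sqrt{-15})$.
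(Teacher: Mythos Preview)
Your treatment of part (a) coincides with the paper's: both use the decomposition $E(F)[n]\simeq E(\Q)[n]\oplus E^d(\Q)[n]$ for odd $n$, together with Mazur, to eliminate $11$, $13$, $14$, $18$, and the observation that an irreducible rational cubic acquires no root in a quadratic field handles the $2$-torsion. For (b) the paper cites existing constructions (\cite{jkl4} for $\Z/2\Z\oplus\Z/10\Z$, $\Z/2\Z\oplus\Z/12\Z$, $\Z/3\Z\oplus\Z/3m\Z$, $\Z/4\Z\oplus\Z/4\Z$, and \cite{lol} for $\Z/16\Z$) rather than building families from scratch; your sketch is in the same spirit.

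Part (c) is where the two routes genuinely differ. The paper observes that, since isogenies are twist-invariant and both a $3$- and a $5$-isogeny are present, $E$ must carry a rational $15$-isogeny; it then quotes the four tabulated $j$-invariants on $X_0(15)$ from \cite{mod} and finishes each twist class with the division polynomial method. Your plan instead fibres the universal $X_1(5)$-curve by roots of $\psi_3$ and computes the rational points of the resulting plane curve directly. One correction: that curve is not a twist of $X_1(15)$, since it parametrises $(E,\pm P_5,\langle P_3\rangle)$---a $5$-torsion point together with a $3$-isogeny, not a $15$-torsion point---so it is the intermediate curve $X_\Delta(15)$ for $\Delta=\{\pm1,\pm4\}\le(\Z/15\Z)^\times$, which is $2$-isogenous rather than isomorphic to $X_1(15)$. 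It still has genus $1$ and rank $0$, so your argument survives the correction; but carrying it out essentially redoes the $X_0(15)(\Q)$ computation that the paper takes off the shelf, making the paper's route shorter while yours is more self-contained.
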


\begin{remark}
\begin{itemize}
\item[a)] The elliptic curves 50B1 and 50A3 are twists by 5 of each other, and hence become isomorphic over $\Q(\sqrt 5)$. Similarly, 50B2 and 450B4 are twists of each other by $-15$ and become isomorphic over $\Q(\sqrt{-15})$.
\item[b)] These elliptic curves are "exceptional curves", in the sense that they are the only elliptic curves (not just rational) over the respective quadratic fields with $15$-torsion (see \cite{kn,naj2} for details).

\end{itemize}
\end{remark}

Before we proceed with the proof of Theorem \ref{kvkl}, we will prove the following lemma.
\begin{lemma}
Let $E$ be a rational elliptic curve. Then in the family of all quadratic twists $E^d$ of $E$ (including $E$ itself) there is at most one elliptic curve with nontrivial $n$-torsion for $n=5,7$, and at most $2$ curves with $3$-torsion.
\end{lemma}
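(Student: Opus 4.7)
The plan is to translate the question into one about the mod $n$ Galois representation $\rho_n \colon G_\Q \to \GL_2(\F_n)$ of $E$ and its interaction with quadratic characters. The key observation is that $E^d$ becomes isomorphic to $E$ over $\Q(\sqrt d)$, so if $P \in E^d(\Q)$ has order $n$ then, transported to $E$, it gives a point $Q \in E[n](\overline \Q)$ on which $G_\Q$ acts by the quadratic character $\chi_d$ associated to $\Q(\sqrt d)/\Q$. Conversely, a nonzero vector in $E[n]$ spanning a Galois-stable line on which $G_\Q$ acts through a character $\chi$ of order dividing $2$ yields a rational $n$-torsion point on the twist $E^d$ with $\chi_d = \chi$, and different $d$ modulo squares give different characters hence different lines. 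Therefore the number of twists of $E$ (including $E$ itself) with nontrivial rational $n$-torsion equals the number of $G_\Q$-stable lines in $E[n]$ on which $\rho_n$ acts via a character of order at most $2$.

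Now I would use the fact that $\det \rho_n = \chi_n$ is the mod $n$ cyclotomic character, of order $n-1 \in \{2,4,6\}$ for $n=3,5,7$. In dimension $2$ a reducible representation has either one or two stable lines, and if there are two, the product of the two characters is $\det \rho_n = \chi_n$. First I would rule out the case of more than two stable lines: three or more stable lines force $\rho_n$ to be scalar, so $\chi_n$ would be a square in the character group; but $\chi_n$ has even order $n-1$, so this is impossible for $n=3,5,7$. For $n=5,7$, if both stable lines carried characters of order $\leq 2$, their product would have order $\leq 2$, contradicting that $\chi_n$ has order $4$ or $6$; so at most one of the (at most two) stable lines can contribute, giving at most one twist. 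If $\rho_n$ has a unique stable line we trivially get at most one twist.

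For $n=3$ the argument is even simpler: $\F_3^\times$ has order $2$, so every character $G_\Q \to \F_3^\times$ has order dividing $2$ automatically, so every stable line contributes a twist. Since there are at most two such lines by the scalar-image argument above, we get at most $2$ twists with nontrivial $3$-torsion.

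The main issue to be careful about is the bijection between twists with $n$-torsion and stable lines carrying a quadratic character: I need to ensure that distinct square classes of $d$ yield truly distinct lines (true because $d \bmod (\Q^*)^2$ is recovered from $\chi_d$) and that the $\Q(\sqrt d)$-isomorphism between $E$ and $E^d$ really does convert a $\Q$-rational torsion point on one side into an eigenvector for $\chi_d$ on the other. Both are standard facts about quadratic twists and can be stated briefly without detailed computation.
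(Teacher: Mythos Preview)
Your argument is correct and takes a genuinely different route from the paper's. The paper argues directly: for $n=5,7$, two twists $E^d,E^{d'}$ with $n$-torsion would force $E^d$ to acquire full $n$-torsion over $\Q(\sqrt{d'/d})$ via \eqref{zbrojkvad}, which is impossible by the Weil pairing; for $n=3$ it invokes an external lemma of Fujita to rule out $(\Z/3\Z)^3$ inside a biquadratic extension. By contrast, you reformulate everything uniformly in terms of the mod~$n$ representation: twists with rational $n$-torsion biject with $G_\Q$-stable lines in $E[n]$ carrying a character of order $\le 2$, three stable lines force a scalar image (impossible since $\chi_n$ has even order $n-1$ and hence is not a square of any $\F_n^\times$-valued character), and with two stable lines for $n=5,7$ the product of the two line-characters is $\chi_n$ of order $4$ or $6$, so they cannot both have order $\le 2$. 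Your approach is more conceptual and entirely self-contained (no appeal to \cite{fuj2} or to Kamienny--Kenku--Momose), at the cost of setting up the dictionary between twists and eigen\-lines; the paper's approach for $n=5,7$ is shorter and more hands-on but then needs an outside reference for $n=3$. One small point worth making explicit in your write-up: the injectivity of the map from twists to lines uses that for prime $n\ge 3$ a rational elliptic curve cannot have full $n$-torsion over $\Q$, so each twist with nontrivial $n$-torsion contributes exactly one line.
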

\begin{proof}
We will often use the fact (see for example \cite[Theorem 3 and Corollary 4]{gt}) that if $F=\Q (\sqrt d)$, and $n$ an odd integer $>1$ then
\begin{equation}
E(F)[n]=E(\Q)[n]\oplus E^d(\Q)[n].
\label{zbrojkvad}
\end{equation}


Suppose that two twists $E^d$ and $E^{d'}$ of $E$ have non-trivial $n$-torsion, for $n=5$ or $7$.  Then $E^{d'}$ is a twist of $E^{d}$ by $d'/d$ and now \eqref{zbrojkvad} implies that $E^d(\Q(\sqrt{d/d'}))$ has full $n$-torsion, which is impossible.

It is impossible that $E$ has 3 twists with $3$-torsion, as this would imply that, by \cite[Lemma 9]{fuj2}, $E^d(F_2)$ would contain $(\Z /3\Z)^3$ for some twist $E^d$ of $E$ and some biquadratic extension $F_2$ of $\Q$.
\end{proof}

\begin{proof}[Proof of Theorem \ref{kvkl}]
The possible torsion structures of a rational elliptic curve over a quadratic field is obviously a subset of the list (\ref{kvad}). The equality (\ref{zbrojkvad}) rules out the possibility of $n$-torsion for $n=11,13$.
Note that the number of points of order $2$ over $F$ on an elliptic curve
$$E:y^2=f(x)=x^3+ax+b$$
is equal to the number of roots of $f$ over $F$. It follows that if $E(\Q)$ has no $2$-torsion, then neither does $E(F)$.

 Suppose $E(F)$ had $2n$-torsion, for $n=7$ or $9$. Then, as noted above, both $E(\Q)$ and $E^d(\Q)$ would have a $2$-torsion point, and by \eqref{zbrojkvad} it would follow that either $E(\Q)$ or $E^d(\Q)$ has a point of order $n$ and therefore also a point of order $2n$, which is impossible by Mazur's theorem.

It can be seen from \cite[Theorems 3.2., 3.3., 3.4., 3.5. and 3.6.]{jkl4} that there exist infinitely many rational elliptic curves with torsion $\Z/2 \Z \oplus \Z/10 \Z,\ \Z/2 \Z \oplus \Z/12 \Z,\ $ $\Z/3 \Z \oplus \Z/3 \Z,\ \Z/3 \Z \oplus \Z/6 \Z$  and $
\Z/4 \Z \oplus \Z/4 \Z$.

In \cite[Remark 2.6. (d)]{lol} one is given a construction which can be used to construct infinitely many rational elliptic curves with $16$-torsion over quadratic fields.

Finally, we wish to find all rational elliptic curves with $15$-torsion over quadratic fields. Let $E/ \Q$ be an elliptic curve which attains $15$-torsion over a quadratic field. By (\ref{zbrojkvad}) this implies that,
\begin{equation}
E(\Q)_{tors} \simeq \Z /3 \Z \text{ and }E^d(\Q)_{tors} \simeq \Z /5 \Z
\label{15-torz}
\end{equation} or vice versa. Suppose without loss of generality that it is as in (\ref{15-torz}). It also follows, since if $E$ has a $p$-isogeny, so do all the quadratic twists $E^d$ of $E$, that $E$ has to have a $15$-isogeny. But there are only 4 families of twists of rational elliptic curves with a $15$-isogeny \cite[p.78--80]{mod}, those with $j$-invariants
$$-25/2,\ -349938025/8,\ -121945/32,\  46969655/32768,$$
which are the twists of the elliptic curves 50A1, 50A2, 50B1 and 50B2, respectively.

 By the division polynomial method we find that 50B1 has $15$-torsion only over one quadratic field, namely $\Q(\sqrt 5)$, that 50B2 has $15$-torsion only over one quadratic field, namely $\Q(\sqrt{-15})$, and that 50A1 and 50A2 have no twists with $5$-torsion, completing the proof of the theorem.
\end{proof}

\section{Auxiliary results}
In this section we prove a series of results that we will need for the proof of Theorem \ref{gltm}.

\begin{lemma}
\label{spust}
Let $F/ \Q$ be a quadratic extension, $n$ an odd positive integer, and $E/ \Q$ an elliptic curve such that $E(F)$ contains $\Z /n\Z$. Then $E/ \Q$ has an $n$-isogeny.
\end{lemma}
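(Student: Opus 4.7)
The plan is to take a point $P \in E(F)$ of order $n$ and construct a cyclic subgroup $C \subseteq E(\overline{\Q})$ of order $n$ that is $\Gal(\overline{\Q}/\Q)$-stable; then $E \to E/C$ will be an $n$-isogeny defined over $\Q$. First I would reduce to prime powers: the cyclic group $\langle P \rangle$ is the internal direct sum of its $p$-primary components for the primes $p \mid n$, and $p^{v_p(n)}$-isogenies compose to give an $n$-isogeny, so it suffices to treat the case $n = p^a$ for an odd prime $p$.

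The next step is to study $M := E(F)[p^a]$ as a Galois module. Since $\Gal(\overline{\Q}/F)$ is normal of index $2$ in $\Gal(\overline{\Q}/\Q)$, the subgroup $M$ is $\Gal(\overline{\Q}/\Q)$-stable, and the action factors through $\Gal(F/\Q) = \langle \sigma \rangle$, so $\sigma$ acts on $M$ as an involution. Because $p$ is odd, $2$ is a unit in $\Z/p^a\Z$, so the idempotents $(1 \pm \sigma)/2$ split $M$ into the $\pm 1$-eigenspaces of $\sigma$:
$$M = M^+ \oplus M^-.$$
This is essentially the content of equation (\ref{zbrojkvad}): $M^+ = E(\Q)[p^a]$, and $M^-$ is canonically isomorphic to $E^d(\Q)[p^a]$ for $F = \Q(\sqrt{d})$.

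Now I would write $P = P^+ + P^-$ with $P^\pm \in M^\pm$. Because the sum is direct, $p^k P = 0$ forces $p^k P^+ = p^k P^- = 0$ separately, so $\ord(P) = \max(\ord(P^+), \ord(P^-))$. Hence one of $P^\pm$, say $P^\varepsilon$, has order $p^a$. Then $\langle P^\varepsilon \rangle \subseteq M^\varepsilon$ is cyclic of order $p^a$ and $\sigma$-stable (every element is a $\sigma$-eigenvector), and since $\Gal(\overline{\Q}/F)$ acts trivially on $M$ it is in fact $\Gal(\overline{\Q}/\Q)$-stable. This produces a $\Q$-rational cyclic $p^a$-isogeny, and reassembling over the primes $p \mid n$ yields the desired $n$-isogeny.

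The only delicate point is making sure $\langle P^\varepsilon \rangle$ has order exactly $p^a$, which falls directly out of the direct-sum decomposition. Notably, no special case analysis is needed—in particular, no separate treatment of $p = 3$ or of the situation $E[p] \subseteq E(F)$—because the eigenspace splitting works uniformly across every odd prime.
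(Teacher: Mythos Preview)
Your proof is correct and follows essentially the same route as the paper: reduce to prime powers, use the $\pm1$-eigenspace decomposition of $E(F)[p^a]$ under $\Gal(F/\Q)$ (which is exactly equation~\eqref{zbrojkvad}), and observe that one eigenspace contains a point of order $p^a$ generating a $\Gal(\overline{\Q}/\Q)$-stable cyclic subgroup. The only cosmetic difference is that the paper identifies $M^-$ with $E^d(\Q)[p^a]$ and then transfers the resulting isogeny back from the twist, whereas you stay inside $E$ and note directly that any subgroup of an eigenspace is $\sigma$-stable; your phrasing is slightly more direct but the content is the same.
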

\begin{proof}
Factor $n$ as $n=\prod_{i=1}^kp_i^{e_i}$, where $p_i$ and $p_j$ are distinct primes for $i\neq j$.
As already mentioned in (\ref{zbrojkvad}), if $F=\Q (\sqrt d)$, then for each $i=1,\ldots, k$
$$E(F)[p_i^{e_i}]=E(\Q)[p_i^{e_i}]\oplus E^d(\Q)[p_i^{e_i}].$$
Thus either $E(\Q)$ or $E^d(\Q)$ has a point of order $p_i^{e_i}$ and thus a $\Gal(\overline \Q /\Q)$-invariant subgroup generated by this point.  If an elliptic curve has a $\Gal(\overline \Q /\Q)$-invariant cyclic subgroup of order $p_i^{e_i}$, then so does every twist of $E$ and hence we conclude that $E$ has a $\Gal(\overline \Q /\Q)$-invariant cyclic subgroup of order $n$, proving the lemma.
\end{proof}

We will also extensively use the well-known classification of possible degrees of cyclic isogenies over $\Q$.

\begin{tm}[\cite{maz2,ken2,ken3,ken4}]
\label{izogklas}
Let $E/ \Q$  be an elliptic curve with an $n$-isogeny. Then $n\leq 19$ or $n\in \{21,25,27,37,43,67,163\}$. If $E$ does not have complex multiplication, then $n\leq 18$ or $n\in \{21,25,37\}$.
\end{tm}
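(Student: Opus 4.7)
The plan is to translate the existence of a $\Q$-rational $n$-isogeny on $E$ into the existence of a non-cuspidal $\Q$-rational point on the modular curve $X_0(n)$, whose points classify isomorphism classes of pairs $(E,C)$ with $C$ a cyclic $\Gal(\overline\Q/\Q)$-stable subgroup of order $n$. Once the problem is cast this way, the theorem becomes a finiteness-plus-enumeration statement for $X_0(n)(\Q)$. An $n$-isogeny factors through a $p_i^{e_i}$-isogeny for every prime power in the factorization $n=\prod p_i^{e_i}$, so it suffices to first determine the allowed prime powers and then decide which compatible products actually give rational points on $X_0(n)$.

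For prime $n = p$, I would separate into cases by genus. When $p \leq 7$, $X_0(p)$ has genus $0$ with a rational cusp and so has infinitely many rational points. For $p \geq 11$ the argument rests on Mazur's Eisenstein descent: one identifies the Eisenstein quotient $\widetilde J$ of $J_0(p)$, shows via a study of the Eisenstein ideal and a $2$-descent/Iwasawa-theoretic argument that $\widetilde J(\Q)$ is finite, and then uses a formal-immersion argument for the composite $X_0(p) \to J_0(p) \to \widetilde J$ to force every non-cuspidal rational point into a very small set, which one finally identifies as giving a CM elliptic curve with one of the isogeny degrees $11, 19, 43, 67, 163$, together with the sporadic non-CM cases $p = 17, 37$. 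For prime powers $p^k$ with $k \geq 2$, the map $X_0(p^k) \to X_0(p)$ confines $p$ to $\{2,3,5\}$, and then the curves $X_0(4), X_0(8), X_0(9), X_0(16), X_0(25), X_0(27)$ (all of genus $\leq 1$) are analyzed directly, while $X_0(32), X_0(49), X_0(125), \dots$ are ruled out by rank-$0$ Mordell--Weil computations on their Jacobians.

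For composite $n$, one knows from the prime-power step which $p_i^{e_i}$ can occur, and must then check which products are realized. Some products are excluded \emph{a priori} (e.g.\ any $n$ divisible by both $2$ and $11$, or involving $43,67,163$ together with another prime, since the CM orders are too restrictive to carry two independent rational cyclic subgroups), while the remaining candidates — leading to $n \in \{21, 25, 27\}$ beyond the trivial range — are confirmed by exhibiting explicit rational points on the corresponding fiber-product modular curves or by computing $J_0(n)(\Q)$ and pulling back. The main obstacle is unquestionably the prime case: proving that the Eisenstein quotient of $J_0(p)$ has rank zero over $\Q$ and that the relevant formal immersions are everywhere injective is the heart of Mazur's argument, and everything else in the proof is relatively mechanical compared to this input.
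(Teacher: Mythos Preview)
The paper does not prove this theorem at all: it is stated as a known result with attributions to Mazur and Kenku (the bracket \texttt{[\textbackslash cite\{maz2,ken2,ken3,ken4\}]} in the theorem header), and no proof or sketch follows. The paper simply \emph{uses} the classification as a black box throughout Section~5.

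Your proposal is therefore not comparable to anything in the paper, but it is a reasonable high-level outline of how the cited works actually establish the result. A few remarks: your description of the prime case is essentially correct as a summary of Mazur's method, and your reduction of the composite case via covering maps $X_0(n)\to X_0(m)$ for $m\mid n$ is the standard route. One inaccuracy: for $p=11$ the rational points on $X_0(11)$ arise not only from CM curves but also from the cusps, and there \emph{are} non-cuspidal rational points (three $j$-invariants with an $11$-isogeny, not all CM), so $11$ should not be listed as a CM-only degree. Also, $p=13$ gives a genus-$0$ curve $X_0(13)$ with infinitely many rational points, so it belongs with $p\le 7$ in your genus split, not with $p\ge 11$. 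These are minor slips in an otherwise sound sketch; for the purposes of the present paper, the correct response is simply that the theorem is quoted without proof.
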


Next we show that from 2 independent isogenies of degrees $m$ and $n$ on an elliptic curve $E$, one can deduce the existence of a $mn$-isogeny on an isogenous curve $E'$.

\begin{lemma}
\label{izogzbrajanje}
Let $E/F$ be an elliptic curve over a number field with $2$ independent isogenies (the intersection of their kernels is trivial) of degrees $m$ and $n$ (over $F$). Then $E$ is isogenous (over $F$) to an elliptic curve $E'/F$ with an $mn$-isogeny.
\end{lemma}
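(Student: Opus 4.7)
The plan is to take $E' = E/C_1$ with quotient isogeny $\phi\colon E\to E'$, and exhibit a $\Gal(\overline F/F)$-stable cyclic subgroup of $E'$ of order $mn$; the desired $mn$-isogeny is then $E'\to E'/(\text{this subgroup})$. Here $C_1,C_2\subseteq E(\overline F)$ denote the (cyclic) kernels of the two given isogenies of degrees $m,n$.

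Define $H := [m]_E^{-1}(C_2) = \{w\in E(\overline F) : mw\in C_2\}$. Since $[m]$ is Galois-equivariant and $C_2$ is Galois-stable, $H$ is Galois-stable. The inclusion $C_1\subseteq E[m]\subseteq H$ is immediate, and $|H|=|E[m]|\cdot|C_2|=m^2 n$, so $\phi(H)=H/C_1$ is a Galois-stable subgroup of $E'$ of order $mn$. It remains to show $\phi(H)$ is cyclic, after which $E'\to E'/\phi(H)$ is the desired cyclic $mn$-isogeny.

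I would verify cyclicity prime by prime. Fix a prime $\ell$ and set $a=v_\ell(m)$, $b=v_\ell(n)$; the $\ell$-primary part $H_\ell=[\ell^a]^{-1}(C_2[\ell^b])\subseteq E[\ell^{a+b}]$ has order $\ell^{2a+b}$. Choose a basis $e_1,e_2$ of $E[\ell^{a+b}]$ with $C_2[\ell^b]=\langle\ell^a e_1\rangle$; then $H_\ell=\langle e_1,\ell^b e_2\rangle$. The disjointness $C_1\cap C_2=0$ forces the $e_2$-coefficient of a generator of $C_1[\ell^a]$ to be a unit mod $\ell$, so after rescaling $C_1[\ell^a]=\langle\ell^b(xe_1+e_2)\rangle$ for some $x\in\mathbb Z/\ell^a$. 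The resulting relation $\ell^b e_2\equiv -\ell^b x\cdot e_1$ modulo $C_1[\ell^a]$ shows every class in $H_\ell/C_1[\ell^a]$ is a multiple of $e_1$; a direct computation (setting $ke_1=j\ell^b(xe_1+e_2)$ and matching coefficients in both components) forces $j\equiv 0\pmod{\ell^a}$ and then $k\equiv 0\pmod{\ell^{a+b}}$, so $e_1$ has order exactly $\ell^{a+b}$ in the quotient. Hence $H_\ell/C_1[\ell^a]\cong\mathbb Z/\ell^{a+b}$, and assembling over all primes yields $\phi(H)$ cyclic of order $mn$.

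The coprime case $\gcd(m,n)=1$ is much easier: then $C_1\oplus C_2$ is already a Galois-stable cyclic subgroup of order $mn$ inside $E$, so one may take $E'=E$. The main obstacle is the prime-by-prime local analysis in the mixed case $\ell\mid\gcd(m,n)$, where the explicit coordinates on $E[\ell^{a+b}]$ are needed to show that the disjointness hypothesis is exactly what makes the quotient cyclic rather than of type $\mathbb Z/\ell^s\oplus\mathbb Z/\ell^t$ with $s\ge 1$.
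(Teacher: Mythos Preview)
Your proof is correct, and you land on the same curve $E'=E/C_1$ and the same order-$mn$ subgroup as the paper does, but the two arguments package things differently. The paper constructs the $mn$-isogeny directly as the composite $h=g\circ\hat f\colon E'\to E''$, where $\hat f$ is the dual of the quotient map $f\colon E\to E'=E/C_1$; one then checks that $\ker h=\hat f^{-1}(C_2)$, which equals your $\phi(H)$ since $\hat f\circ f=[m]_E$. For cyclicity the paper argues by contradiction: if $E'[\ell]\subset\ker h$ for some prime $\ell$, then $\hat f(E'[\ell])$ is a nonzero subgroup lying in both $\ker f=C_1$ and $\ker g=C_2$, contradicting disjointness. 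This is shorter and avoids choosing bases, at the cost of a small side verification that such an $\ell$ must divide both $m$ and $n$. Your prime-by-prime coordinate computation is longer but entirely explicit and makes the role of the disjointness hypothesis very transparent (it is exactly what forces the $e_2$-coefficient to be a unit). Either route is fine; the dual-isogeny formulation is the more conceptual one.
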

\begin{proof}
Suppose $E$ has an $m$-isogeny $f:E\rightarrow E'$ and an $n$-isogeny $g:E \rightarrow E''$. We claim that $h=g \circ \hat f:E'\rightarrow E'' $, where $\hat f$ is the dual isogeny of $f$, is a cyclic $mn$-isogeny.

Suppose the opposite, that $h$ is not cyclic. Then  for some integer $l\geq 2$ which divides both $m$ and $n$, $E'[l]\subset \ker h$. Note that
$$\hat f(E'[l])\simeq \Z/l\Z \text{ and }\hat f (E'[l]) \subset \ker f$$
since $f\circ \hat f=[n]_{E'}$. But since $\hat f(E'[l])$ contains non-zero points and $\ker g$ and $\ker f$ have trivial intersection, this means that $g$ cannot send $\hat f(E'[l])$ to $0$. Therefore $h(E'[l])\neq 0$, which is a contradiction.
\end{proof}

The following four results will be useful in controlling the 2-primary torsion of rational elliptic curves over cubic fields.

\begin{lemma}[{\cite[Lemma 1]{naj2}}]
\label{2-syl}
 If $E(\Q )$ has a nontrivial $2$-Sylow subgroup, $E(K)$ has the same $2$-Sylow subgroup as $E(\Q)$.
\end{lemma}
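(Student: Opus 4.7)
My plan is to prove by induction on $n$ that $E(K)[2^n] = E(\Q)[2^n]$ for every $n \geq 1$ and then conclude by taking the union. The features to exploit are that $[K:\Q]=3$ is odd (so $K$ contains no intermediate quadratic field over $\Q$) and that $E(\Q)[2] \neq 0$ by hypothesis.

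For the base case, I would factor the right-hand side of the Weierstrass equation. Writing $E: y^2 = f(x)$ with $f \in \Q[x]$ cubic, the hypothesis gives a rational root $e_1$ of $f$, so $f = (x-e_1) g(x)$ with $g$ quadratic over $\Q$. The nontrivial $2$-torsion has $x$-coordinate equal to $e_1$ or to a root of $g$, and these latter roots lie in $\Q$ or in a single quadratic extension of $\Q$. Since $[K:\Q]=3$ is odd, $K$ contains no quadratic subfield over $\Q$, so the roots of $g$ lie in $K$ only when they already lie in $\Q$. Therefore $E(K)[2] = E(\Q)[2]$.

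For the inductive step, I would take $P \in E(K)[2^n]$ and observe that $Q := 2P$ lies in $E(K)[2^{n-1}] = E(\Q)[2^{n-1}]$ by the inductive hypothesis, so in particular $Q \in E(\Q)$. The four halves of $Q$ in $E(\overline\Q)$ form a coset of $E[2]$ permuted by $\Gal(\overline\Q/\Q)$. Because $\Q(P) \subseteq K$, the orbit size $[\Q(P):\Q]$ divides $[K:\Q]=3$, so it is either $1$ or $3$. In the first case $P \in E(\Q)$ and we are done. In the second case, the orbit of $P$ accounts for $3$ of the $4$ halves, so the remaining half is itself a one-element Galois orbit, i.e.\ a $\Q$-rational point $P_0$; then $P - P_0 \in E[2] \cap E(K) = E(K)[2] = E(\Q)[2]$ by the base case, forcing $P = P_0 + (P-P_0) \in E(\Q)$, contradicting that $P$ is in a $3$-orbit. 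So only orbit size $1$ is possible, closing the induction.

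The part I expect to be the actual obstacle is ruling out the $3$-orbit scenario in the inductive step: a priori, when $K$ is non-Galois and its Galois closure $L$ contains a quadratic subfield $M$, new $2$-torsion of $E$ can in principle live over $M$, so one cannot immediately exclude a genuinely new $2$-power torsion point of $E(K)$. The resolution is the purely combinatorial observation that a $3$-orbit inside a $4$-element set is necessarily accompanied by a rational fixed element; combining this with the base case identity $E(K)[2] = E(\Q)[2]$ lets me sidestep the need for any separate analysis of $L$.
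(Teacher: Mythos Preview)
Your argument is correct. The base case is exactly the right observation: once $f$ has a rational root, the remaining $2$-torsion lives over $\Q$ or a quadratic field, neither of which can embed nontrivially into a cubic field. The inductive step is also sound; the key point you isolate---that a Galois orbit of size $3$ inside the $4$-element set of halves of a rational point forces the fourth half to be rational, which then drags $P$ itself down to $\Q$ via the base case---is a clean and entirely elementary device.

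As for comparison: the present paper does not actually prove this lemma; it simply quotes it from \cite[Lemma~1]{naj2}. So there is no in-paper argument to compare against. Your self-contained proof is in the same spirit as the standard one (exploiting that $[K:\Q]$ is coprime to $2$ together with the explicit structure of $2$-torsion), and supplies what the paper omits.
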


\begin{proposition}
\label{2-stvaranje}
Let $M \neq \Q(i)$, and let $E/ \Q$ be an elliptic curve such that $E(M)[2]=0$. Then the $2$-Sylow subgroup of $E(L)$ is either trivial or $\Z / 2\Z \oplus \Z / 2\Z$.
\end{proposition}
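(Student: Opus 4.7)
The plan is to exploit the fact that $\Gal(L/M)$ has order $3$, coprime to $2$, so its action on the $2$-primary torsion $E(L)[2^\infty]$ is semisimple in a useful way. Let $\sigma$ generate $\Gal(L/M)\cong \Z/3\Z$; since $E$ is defined over $\Q$, $\sigma$ acts on $E(L)$ and preserves $E(L)[2^\infty]$.

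The first step, which I expect to be the main insight, is a trace argument: for any $P\in E(L)[2^\infty]$, the element $(1+\sigma+\sigma^2)P$ is $\sigma$-invariant and hence lies in $E(M)\cap E(L)[2^\infty]=E(M)[2^\infty]$. But $E(M)[2]=0$ forces $E(M)[2^\infty]=0$ (a finite $2$-group with no nontrivial $2$-torsion is trivial), so this element vanishes. Thus the operator $1+\sigma+\sigma^2$ annihilates $E(L)[2^\infty]$.

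This endows $E(L)[2^\infty]$ with the structure of a finite module over $\Z[\sigma]/(\sigma^2+\sigma+1)\cong\Z[\zeta_3]$, and after $2$-adic completion, of a module over $\Z_2[\zeta_3]$. The key arithmetic fact is that $x^2+x+1$ is irreducible modulo $2$, so $\Z_2[\zeta_3]$ is the ring of integers in the unramified quadratic extension of $\Q_2$: a DVR with uniformizer $2$ and residue field $\F_4$. By the structure theorem for finite modules over a DVR, $E(L)[2^\infty]$ decomposes as $\bigoplus_i \Z_2[\zeta_3]/(2^{n_i})$, and each summand has $\Z_2$-rank $2$. Since $E(L)[2^\infty]$ sits inside $E[2^\infty]\cong(\Q_2/\Z_2)^2$, which has $\Z_2$-rank $2$, at most one summand can occur, forcing $E(L)[2^\infty]\cong(\Z/2^n\Z)^2$ for some $n\geq 0$.

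Finally, to rule out $n\geq 2$, I would apply the Weil pairing: if $E(L)$ contains $E[4]\cong(\Z/4\Z)^2$, then $\mu_4\subset L$, i.e.\ $i\in L$. This is impossible when $L/\Q$ is cubic, since $[\Q(i):\Q]=2\nmid 3$; in the sextic $S_3$ case, $L$ has a unique quadratic subfield, namely $M$, so $i\in L$ would force $M=\Q(i)$, contradicting the hypothesis. Hence $n\in\{0,1\}$, yielding exactly the two allowed possibilities for the $2$-Sylow subgroup. The whole argument is really driven by the trace step; once the $\Z[\zeta_3]$-module structure is in hand, the rest is a standard DVR structure theorem plus one Weil pairing input.
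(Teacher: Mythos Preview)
Your proof is correct and takes a genuinely different route from the paper's. The paper argues in two separate steps: first, since $L/M$ is Galois of degree $3$, if the irreducible cubic $f(x)$ defining $E[2]$ acquires one root in $L$ it acquires all of them, so $E(L)[2]$ is either $0$ or $(\Z/2\Z)^2$; second, to exclude a point of order $4$, it notes that $i\notin L$ forces $E(L)[4]\simeq \Z/2\Z\oplus\Z/4\Z$ and then runs the exact sequence in $G$-cohomology (with $G=\Gal(L/M)$ and $H^1(G,E(L)[2])=0$) to produce a nonzero $G$-fixed $2$-torsion point in $E(M)$, a contradiction.

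Your trace argument packages both steps at once: the vanishing of $1+\sigma+\sigma^2$ on $E(L)[2^\infty]$ gives a $\Z[\zeta_3]$-module structure, and since $2$ is inert in $\Z[\zeta_3]$ the structure theorem over the DVR $\Z_2[\zeta_3]$ forces $E(L)[2^\infty]\cong(\Z/2^n\Z)^2$; the Weil pairing then caps $n$ at $1$. This is more conceptual and would generalize cleanly to other coprime prime pairs, whereas the paper's argument is more hands-on but entirely elementary, needing only the behavior of roots of a cubic and a one-line $H^1$ vanishing. One minor phrasing issue: $(\Q_2/\Z_2)^2$ does not literally have ``$\Z_2$-rank $2$''; what you use (and what is true) is that every finite subgroup of $E[2^\infty]$ is generated by at most two elements, which is enough to force a single $\Z_2[\zeta_3]$-cyclic summand.
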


\begin{proof}
Suppose $E(L)[2]\neq 0 $. Write
$$E:y^2=f(x)=x^3+ax+b,$$
where $f(x)$ is irreducible over $M$. As $L/M$ is Galois, it follows that since $f$ has one root over $L$, all the roots of $f$ are defined over $L$. Hence $E(L)[2]\simeq \Z / 2\Z \oplus \Z / 2\Z$.

Suppose that $E(L)$ has a point of order 4. As $L$ does not contain $i$, it follows that the only possibility for $E(L)$ to have a $4$-torsion point is that $E(L)[4]\simeq \Z /2\Z \oplus \Z /4\Z$.

We now prove that a group of order 3 has to fix a line of $\Z /2\Z \oplus \Z /4\Z$. Let $G:=\Gal(L/M)$. We have the short exact sequence
\begin{equation}
\label{ses}
0\rightarrow E(L)[2] \rightarrow E(L)[4] \rightarrow E(L)[4]/E(L)[2] \rightarrow 0
\end{equation}
and it follows that
\begin{equation}
\label{ses2}
0 \rightarrow E(L)[2]^G \rightarrow E(L)[4]^G \rightarrow (E(L)[4]/E(L)[2])^G \rightarrow H^1(G,E(L)[2])
\end{equation}
It is easy to compute that $H^1(G,E(L)[2])=0$, and since $E(L)[4]/E(L)[2]$ is a group of order $2$, it follows that
$$(E(L)[4]/E(L)[2])^G\simeq \Z/2\Z,$$
from which we conclude that $E(L)[4]^G\neq 0$. We conclude that $E(M)$ has a $2$-torsion point, which is a contradiction.
\end{proof}

\begin{proposition}
\label{2-stvaranje2}
Let $M= \Q(i)$. Let $E/ \Q$ be an elliptic curve with no $\Q$-rational points of order $2$. Then
\begin{itemize}
\item[a)] If $E(K)$ has a point of order $4$, then $\Delta(E)\in -1\cdot (\Q^*)^2,$ $j(E)=-4t^3(t+8)$ for some $t\in \Q$ and $E(L)[4]\simeq \Z /4\Z \oplus \Z /4\Z$.

\item[b)] $E(K)$ has no points of order $8$.
\end{itemize}
\end{proposition}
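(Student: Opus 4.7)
The plan is to leverage the Galois action on $E[4]$ induced by the order-$3$ subgroup $\Gal(L/M)$, which acts as a $3$-cycle on the three nontrivial $2$-torsion points. Note that $K$ must be non-Galois here (else $M=\Q$), so $[L:\Q]=6$ and $\Gal(L/\Q)\simeq S_3$. For part (a), the cubic $f(x):=x^3+ax+b$ (from a short Weierstrass model $y^2=f(x)$ for $E$) is irreducible over $\Q$ by hypothesis, and hence also over $M=\Q(i)$, since $\gcd(\deg f,[M:\Q])=\gcd(3,2)=1$; in particular $E(M)[2]=0$. A point $Q\in E(K)$ of order $4$ forces $P:=2Q\in E(K)$ to be a nontrivial $2$-torsion point, so $f$ has a root in $K$. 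Then $K=\Q(\alpha)$ for some root $\alpha$, and $L$ is the splitting field of $f$, with quadratic subfield $M=\Q(\sqrt{\Delta(f)})$; the requirement $M=\Q(i)$ gives $\Delta(f)\in -1\cdot(\Q^*)^2$, and since $\Delta(E)=16\Delta(f)$, also $\Delta(E)\in -1\cdot(\Q^*)^2$.

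To identify $E(L)[4]$, let $\sigma$ generate $\Gal(L/M)\simeq\Z/3\Z$; then $\sigma$ permutes the three nonzero $2$-torsion points cyclically (else $E(M)[2]\neq 0$). The orbit $\{Q,\sigma(Q),\sigma^2(Q)\}$ consists of three distinct $4$-torsion points whose doubles are the three distinct nonzero $2$-torsion points of $E$. In a subgroup of $E[4]$ isomorphic to $\Z/2\Z\oplus\Z/4\Z$ only one of the three nonzero $2$-torsion points arises as a double of an element of the subgroup, so the three $\sigma^i(Q)$ force $E(L)[4]=E[4]\simeq(\Z/4\Z)^2$. For the $j$-invariant formula, $\Q(E[4])\subset L$ together with $\Q(E[4])\supset\Q(\zeta_4)=M$, the absence of a degree-$4$ subfield in $L$, and $E(M)[2]=0$, force $\Q(E[4])=L$; the image of $\rho_4$ is then a subgroup of $\GL_2(\Z/4\Z)$ isomorphic to $S_3$ mapping isomorphically onto $\GL_2(\Z/2\Z)$ under reduction. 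Rational elliptic curves with this mod-$4$ image are parametrized by a genus-$0$ modular curve whose $j$-map is $j=-4t^3(t+8)$; this final step is a direct computation.

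For part (b), suppose for contradiction that $R\in E(K)$ has order $8$. Then $Q:=2R\in E(K)$ has order $4$, and part (a) gives $E[4]\subset E(L)$. The cyclic subgroups $\langle R\rangle$ and $\langle\sigma(R)\rangle$ of $E(L)$ both have order $8$, with unique order-$2$ elements $4R=2Q$ and $4\sigma(R)=2\sigma(Q)$ respectively. These are distinct by the orbit analysis from part (a), so $\langle R\rangle\cap\langle\sigma(R)\rangle=0$ and hence $\langle R\rangle+\langle\sigma(R)\rangle$ has order $64$, i.e.\ equals $E[8]$. Thus $E[8]\subset E(L)$, and the Weil pairing gives $\Q(\zeta_8)\subset\Q(E[8])\subset L$, forcing $4=[\Q(\zeta_8):\Q]$ to divide $[L:\Q]=6$, which is absurd.

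The main obstacle is pinning down the explicit parametrization $j(E)=-4t^3(t+8)$ in part (a), since the Galois-theoretic analysis determines the image of $\rho_4$ only up to conjugacy in $\GL_2(\Z/4\Z)$; translating this image data to an explicit modular curve and its $j$-map requires a concrete computation. Part (b), by contrast, reduces to the clean divisibility obstruction $4\nmid 6$ once one notices that the $\sigma$-orbit of a single $8$-torsion point in $E(K)$ already generates $E[8]$ over $L$.
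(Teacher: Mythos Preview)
Your proof is correct, and while it reaches the same conclusions as the paper, it takes a genuinely more elementary route in both parts.

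For part (a), the paper invokes the group-cohomology argument of the preceding proposition (the exact sequence \eqref{ses}--\eqref{ses2} together with $H^1(G,E(L)[2])=0$) to rule out $E(L)[4]\simeq\Z/2\Z\oplus\Z/4\Z$. You instead observe directly that the $\sigma$-orbit of $Q$ produces three order-$4$ points whose doubles are the three distinct nonzero $2$-torsion points, which is impossible inside $\Z/2\Z\oplus\Z/4\Z$ since doubling on that group has image of order $2$. Both arguments then identify $\Q(E[4])=L$ and appeal to the parametrization $j=-4t^3(t+8)$ (the paper cites Dokchitser--Dokchitser \cite{dd} for this; your ``direct computation'' is the same content).

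For part (b), the paper first notes $E(L)[8]\simeq\Z/4\Z\oplus\Z/8\Z$ (implicitly ruling out full $8$-torsion via the Weil pairing) and then runs another cohomological exact sequence to force $E(M)[2]\neq 0$. You instead push the orbit idea one step further: since $4R$ and $4\sigma(R)$ are distinct order-$2$ points, the cyclic groups $\langle R\rangle$ and $\langle\sigma(R)\rangle$ intersect trivially, so together they generate all of $E[8]$, whence $\Q(\zeta_8)\subset L$ and $4\mid 6$. This is cleaner and avoids cohomology entirely; the paper's approach, on the other hand, has the virtue of reusing the same machinery uniformly across Propositions \ref{2-stvaranje} and \ref{2-stvaranje2}.
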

\begin{proof}
First note that from the assumption that $E(\Q)[2]=0$, it follows that $E(\Q(i))[2]=0$. Suppose $E(L)[2]\neq0$. Since $L/ \Q (i)$ is Galois, it follows that $E(L)[2]\simeq\Z / 2\Z \oplus \Z / 2\Z$.

If $E(L)$ has a point of order 4, by the same argument as in the proof of Proposition \ref{2-stvaranje} it follows that $E(L)[4]$ cannot be $\Z/2\Z \oplus \Z/4\Z$. Thus
$$E(L)[4]\simeq \Z/4 \Z \oplus \Z/4\Z,$$
 and $\Q(E[2])=\Q(E[4]).$ Note that for any elliptic curve $E'$, the field $\Q(E'[2])$ contains $\Q(\sqrt \Delta)$ and $\Q(E'[4])$ contains $\Q(i)$, and since $\Q(E[2])$ is a $S_3$ extension of $E$, it follows that $\Delta$ is a square in $\Q(i)$, but not in $\Q$, i.e. $\Delta(E)\in -1\cdot (\Q^*)^2.$

By \cite[Lemma]{dd}, since $\Gal(\Q(E[4])/ \Q)\simeq S_3$, which is isomorphic to a subgroup of $H_{24}=\Z/3\Z \rtimes D_8$, it follows that $j(E)=-4t^3(t+8)$ for some $t\in \Q$. This concludes the proof of a).\\

Suppose now $E(K)$ has a point of order 8. It follows that $E(L)[8]\simeq \Z/4\Z \oplus \Z /8 \Z$. Let $G:=\Gal(L/M)$ and take the short exact sequence

\begin{equation}
\label{ses3}
0\rightarrow E(L)[2] \rightarrow E(L)[8] \rightarrow E(L)[8]/E(L)[2] \rightarrow 0.
\end{equation}
It follows that
\begin{equation}
\label{ses4}
0 \rightarrow E(L)[2]^G \rightarrow E(L)[8]^G \rightarrow (E(L)[8]/E(L)[2])^G \rightarrow H^1(G,E(L)[2]).
\end{equation}
Note that $E(L)[8]/E(L)[2]\simeq \Z/2\Z \oplus \Z /4 \Z$ and since we have shown in the proof of Proposition \ref{2-stvaranje} that $\Z/2\Z \oplus \Z /4 \Z$ has a $G$-invariant line, it follows that
$(E(L)[8]/E(L)[2])^G\neq 0$. Now from the fact that $H^1(G,E(L)[2])=0$ it follows that $E(L)[8]^G\neq 0$, from which it follows $E(M)[2]\neq 0$, which is a contradiction.
\end{proof}

\begin{remark}
Note that there exist elliptic curves such that $\Q(E[2])=\Q(E[4])$ and
$$\Gal(\Q(E[2])/ \Q)\simeq \Gal(\Q(E[4])/ \Q)\simeq S_3.$$
The elliptic curve 1936D1 is such a curve.
\end{remark}

We can combine Propositions \ref{2-stvaranje} and \ref{2-stvaranje2} into the following corollary.

\begin{corollary}
\label{nema4}
Let $E/ \Q$ be an elliptic curve such that $E(\Q)$ has no points of order $4$. Then $E(K)$ has no $8$-torsion and has a point of order $4$ only if $E(\Q)[2]=0$, $M=\Q(i)$, $\Delta(E)\in -1\cdot (\Q^*)^2,$ and $j(E)=-4t^3(t+8)$ for some $t\in \Q$.
\end{corollary}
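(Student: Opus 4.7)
The plan is to reduce the corollary to a case analysis on whether $E(\Q)[2]$ is trivial and on the identity of the quadratic subfield $M$, and then to apply Lemma \ref{2-syl}, Proposition \ref{2-stvaranje}, and Proposition \ref{2-stvaranje2} in each case. The two statements to prove ($E(K)$ has no $8$-torsion; and a point of $4$-torsion in $E(K)$ forces the listed conclusions) will be handled together, since the hypotheses split the same way.

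First I would observe that if $E(\Q)[2]\neq 0$, then Lemma \ref{2-syl} immediately forces the $2$-Sylow subgroup of $E(K)$ to equal the $2$-Sylow of $E(\Q)$; since by hypothesis $E(\Q)$ has no point of order $4$, this rules out both $4$- and $8$-torsion over $K$. So for the remainder one may assume $E(\Q)[2]=0$, i.e.\ the $2$-division polynomial $f(x)=x^3+ax+b$ is irreducible over $\Q$. The small but essential observation here is that an irreducible cubic over $\Q$ has no roots in any quadratic extension (a root would generate a degree $3$ subfield of a degree $2$ field), so $E(M)[2]=0$ whenever $M$ is quadratic; and trivially $E(M)[2]=0$ when $M=\Q$.

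Next I would treat the subcase $M\neq\Q(i)$. By the previous paragraph, Proposition \ref{2-stvaranje} applies, and the $2$-Sylow of $E(L)$ is either trivial or $\Z/2\Z\oplus\Z/2\Z$. Since $E(K)\subseteq E(L)$, this kills both the possibility of $4$-torsion and $8$-torsion over $K$. In particular, a point of order $4$ in $E(K)$ cannot occur in this subcase, so if such a point exists then $M=\Q(i)$, giving one of the claimed conclusions.

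Finally, in the subcase $M=\Q(i)$ with $E(\Q)[2]=0$, Proposition \ref{2-stvaranje2}(b) directly yields that $E(K)$ has no point of order $8$, and Proposition \ref{2-stvaranje2}(a) converts the existence of a $4$-torsion point in $E(K)$ into the statements $\Delta(E)\in -1\cdot(\Q^*)^2$ and $j(E)=-4t^3(t+8)$. Combining the three cases gives both assertions of the corollary. There is no real obstacle: the proof is a bookkeeping exercise in the three hypotheses ($E(\Q)[2]$ trivial or not; $M=\Q(i)$ or not), with the one subtle point being that irreducibility of the $2$-division polynomial over $\Q$ automatically propagates to the quadratic field $M$, which is what lets us feed Proposition \ref{2-stvaranje} its $E(M)[2]=0$ hypothesis.
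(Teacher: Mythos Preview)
Your proof is correct and follows essentially the same approach as the paper's own proof, which simply invokes Lemma \ref{2-syl} in the case $E(\Q)[2]\neq 0$ and Propositions \ref{2-stvaranje} and \ref{2-stvaranje2} in the case $E(\Q)[2]=0$. You have spelled out a detail the paper leaves implicit, namely that $E(\Q)[2]=0$ forces $E(M)[2]=0$ so that the hypothesis of Proposition \ref{2-stvaranje} is met; this is a welcome clarification but not a departure in method.
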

\begin{proof}
If $E(\Q )$ has a $2$-torsion point, the statement follows from Lemma \ref{2-syl}.

If $E(\Q )$ has no $2$-torsion, the statement follows from Propositions \ref{2-stvaranje} and \ref{2-stvaranje2}.
\end{proof}

The next step towards the proof of Theorem \ref{gltm} is to control the growth of the $3$-torsion, for which the following two propositions will be useful.

\begin{lemma}
\label{nova3tor}
If $E(M)$ does not have a point of order $3$, neither does $E(L)$.
\end{lemma}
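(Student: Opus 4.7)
The plan is to reduce the statement to an elementary piece of representation theory of a cyclic group of order $3$. First I would observe that, from the setup, $L/M$ is always Galois with $G := \Gal(L/M) \simeq \Z/3\Z$: in the normal cubic case $M = \Q$ and this is $\Gal(K/\Q)$, while in the non-normal case $M$ is the quadratic subfield of $L$, corresponding to the index-$3$ subgroup $A_3 \leq S_3$. Consequently $E(L)^G = E(M)$, and $G$ acts $\F_3$-linearly on $E(L)[3]$.

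The proof proceeds by contrapositive: assume $E(L)[3] \neq 0$ and deduce $E(M)[3] \neq 0$. The underlying principle is that whenever a $3$-group acts on a non-trivial finite abelian $3$-group $V$, the fixed subgroup $V^G$ is non-trivial; this follows at once from the orbit equation, since $|V| \equiv |V^G| \pmod 3$ forces $|V^G| \geq 3$ once $3 \mid |V|$. Applied to $V = E(L)[3]$ and $G$ of order $3$, this immediately yields $E(L)[3]^G = E(M)[3] \neq 0$.

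For readers preferring an argument that avoids counting, I would split according to $\dim_{\F_3} E(L)[3] \in \{1, 2\}$. If the dimension is $1$, then $G$ acts through $\Aut(\Z/3\Z) \simeq \Z/2\Z$; since $\gcd(3, 2) = 1$ the action is trivial and $E(L)[3] \subseteq E(M)$. If the dimension is $2$, then the action factors through some $\rho: G \to \GL_2(\F_3)$; if $\rho$ is trivial we are done, and otherwise a generator $\sigma$ of $G$ has order $3$, so in characteristic $3$ one has $(\sigma - 1)^3 = 0$, forcing $\sigma - 1$ to be a nonzero nilpotent $2 \times 2$ matrix. Such a matrix has rank $1$, so its kernel is a nontrivial line fixed pointwise by $\sigma$, again giving a nonzero element of $E(M)[3]$.

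I do not anticipate any real obstacle: the argument is just a couple of lines once the Galois-theoretic setup is unpacked. The only step that deserves a brief verification is the identification of $\Gal(L/M)$ as cyclic of order $3$ in both the normal and non-normal cubic cases, but this is immediate from the definition of $M$ in the conventions section.
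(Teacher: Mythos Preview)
Your proof is correct and follows essentially the same approach as the paper: both identify $G=\Gal(L/M)\simeq\Z/3\Z$ acting $\F_3$-linearly on $E(L)[3]$ and conclude that a nonzero such representation must have a nonzero fixed vector. The paper simply cites this last fact from Serre's \emph{Linear Representations of Finite Groups}, whereas you supply two short direct arguments (orbit counting and the nilpotence of $\sigma-1$) in its place.
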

\begin{proof}
Let $G=\Gal(L/M)$. Then $E(L)[3]$ is $\F_3$-linear representation of $G$. Thus if $E(L)[3]\neq0$, then $E(L)[3]^G=E(M)[3]\neq 0$ by \cite[Proposition 26, p.64.]{ser}.
\end{proof}

\begin{proposition}
\label{9izogenija}
Suppose $E(K)$ has a point of order $9$. Then $E/ \Q$ has an isogeny of degree $9$ or $2$ independent isogenies of degree $3$.
\end{proposition}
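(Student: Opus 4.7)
The plan is to analyze the Galois orbit of the cyclic subgroup $C = \langle P \rangle \subseteq E[9]$ of order $9$. Since $\Gal(\overline{\Q}/K)$ fixes $P$ pointwise, it stabilizes $C$ as a subgroup, so the $\Gal(\overline{\Q}/\Q)$-orbit of $C$ has size dividing $[K:\Q] = 3$. If that orbit has size $1$, then $C$ itself is $\Gal$-invariant and $E$ has an isogeny of degree $9$, and we are done. So we may assume the orbit is $\{C_1, C_2, C_3\}$ with $C_1 = C$.

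Next I apply the $[3]$ map. The set $\{3C_1, 3C_2, 3C_3\}$ is a Galois-stable subset of the four cyclic order-$3$ subgroups of $E[3]$. A pigeonhole argument then forces the existence of a $\Gal$-invariant cyclic order-$3$ subgroup $D \subseteq E[3]$: either all three $3C_i$ collapse onto one $\Gal$-stable line $D$ (e.g.\ $D = \langle 3P \rangle$), or the three are distinct and the complementary fourth line $D$ is Galois-fixed. Either way, $D$ yields a $\Q$-rational $3$-isogeny $\pi : E \to E'$; this is the first $3$-isogeny of $E$ we have produced.

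I would then study $\pi(P) \in E'(K)$ of order $3$ or $9$ and the induced Galois action on $E'[3]$. A $\GL_2(\F_3)$-divisibility argument (of the same flavour as Lemma \ref{spust}, using that $18 \nmid |\GL_2(\F_3)| = 48$) shows $E'(\Q)[3] \neq 0$. The sub-case $\pi(P) \notin E'(\Q)$ gets ruled out: the two distinct $3$-torsion points (namely $\pi(P)$ and a $\Q$-rational generator of $E'(\Q)[3]$) would generate $E'[3] \subseteq E'(K)$, forcing $\zeta_3 \in K$ via the Weil pairing, contradicting $[K:\Q]=3$. In the remaining sub-case $\pi(P) \in E'(\Q)$, every Galois conjugate of $P$ lies in $P + D$; analyzing the resulting $\Gal$-invariant subgroup $\langle P \rangle + D \subseteq E[9]$ of order $27$ and its cyclic order-$9$ subgroups produces either a $\Gal$-stable cyclic order-$9$ subgroup (i.e.\ a $9$-isogeny on $E$) or a second $\Gal$-invariant line in $E'[3]$ independent from $\ker\hat\pi$; in the latter case Lemma \ref{izogzbrajanje} applied to the two independent $3$-isogenies on $E'$ transfers to yield either a $9$-isogeny on $E = E'/\ker\hat\pi$ or the second independent $3$-
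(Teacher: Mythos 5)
Your approach is genuinely different from the paper's: the paper passes to the Galois closure $L$ of $K/\Q$ and the quadratic subfield $M$ with $\Gal(L/M)\simeq\Z/3\Z$, and the crucial leverage comes from showing that the relevant case forces $M=\Q(\sqrt{-3})$, whence $E(M)[3]=E(\Q)[3]\oplus E^{-3}(\Q)[3]$ has full $3$-torsion and hence $E(\Q)[3]\neq 0$. You instead work directly with $\Gal(\overline\Q/\Q)$ and $\Gal(\overline\Q/K)$. Your opening step is a nice observation and is correct: since $\Gal(\overline\Q/K)$ fixes $3P$, the orbit of $\langle 3P\rangle$ has size $1$ or $3$, and either way one obtains a $\Gal(\overline\Q/\Q)$-stable line $D\subseteq E[3]$, i.e.\ a rational $3$-isogeny $\pi:E\to E'$.

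After that, however, the argument has a real gap. The assertion that ``a $\GL_2(\F_3)$-divisibility argument shows $E'(\Q)[3]\neq 0$'' is not justified, and in general it is false that a rational elliptic curve with a $3$-isogeny and a $3$-torsion point over a cubic field must have a rational $3$-torsion point. One can salvage it in the sub-case $D\neq\langle 3P\rangle$: there $3\pi(P)\in\ker\hat\pi$, which is a rational line, so $\Q(3\pi(P))$ has degree dividing both $2$ and $3$, forcing $3\pi(P)\in E'(\Q)$. But in the sub-case $D=\langle 3P\rangle$ (the ``all three $3C_i$ collapse'' branch, which is also where you misstate $\langle P\rangle + D$ as having order $27$ when it equals $\langle P\rangle$), the $3$-torsion point $\pi(P)\in E'(K)$ lies \emph{outside} $\ker\hat\pi$, and nothing in your setup forces its line to be Galois-stable: its orbit could have size $3$. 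This is exactly the configuration the paper rules out by the $\beta\in\{3,6\}$ computation, which pins down $M=\Q(\sqrt{-3})$ and then produces the second rational $3$-isogeny; without invoking $M$ or some substitute for it, that case remains open. Finally, the write-up is cut off mid-sentence, so the concluding case analysis is not actually carried out.
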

\begin{proof}
Suppose the opposite, that $E / \Q$ does not have an isogeny of degree 9 nor 2 independent isogenies of degree 3.  Let $\diam{\sigma}=\Gal(L/M)$, $\diam{\tau}=\Gal(L/K)$ and $P\in E(K)$ be a point of order 9. It is easy to see that $E(L)$ has a $9$-torsion point, and that $E(M)$ has a point of order 3 by Lemma \ref{nova3tor}. Also, $E(M)$ cannot have a point of order 9, since this would imply that $E / \Q$ has a 9-isogeny by Lemma \ref{spust}, which would contradict our assumption.

We examine how $\sigma$ acts on $P$. Let $E[9]=\diam{P,Q}$ and $P^\sigma=\alpha P +\beta Q$.

Suppose $\beta=0$. Then $\sigma$ fixes $\diam{P}$ and since $P$ is a $K$-rational point, $\tau$ also fixes $\diam{P}$. Since $\sigma$ and $\tau$ generate $\Gal(L/\Q)$, it follows that that $E / \Q$ has a 9-isogeny, contradicting our assumption.

Note that $P^\sigma \in E(L)$ so $(9-\alpha)P+P^\sigma =\beta Q \in E(L)$, and it follows that $\beta$ has to be $3$ or $6$, otherwise the full $9$-torsion would be defined over $L$, which would further imply that $L=\Q(\zeta_9 )$, which is impossible since $\Gal(L/\Q)\simeq \Z/3\Z$ or $S_3$ and $\Gal(\Q(\zeta_9) /\Q)\simeq \Z /6\Z$. Furthermore, it follows that $E(L)[9]\simeq \Z/3\Z \oplus \Z/9 \Z$ and from this $M=\Q(\sqrt{-3})$.

Let $E[3]=\diam{P',Q'}$. By Lemma \ref{nova3tor}, $E(M)$ has non-trivial $3$-torsion; suppose $P'\in E(M)$. Now $G=\Gal(L/M)$ acts on $\diam{Q'}$, and since $G$ is a group of order 3, it follows that $\diam{Q'}^G=\diam{Q'}$ and hence $E(M)$ has full $3$-torsion. Since $M=\Q(\sqrt{-3})$, it holds that $E(M)[3]=E(\Q)[3]\oplus E^{-3}(\Q)[3]$ by \eqref{zbrojkvad}, and since an elliptic curve over $\Q$ cannot have full $3$-torsion, it follows that $E(\Q)[3]\simeq \Z/3\Z$. Suppose without loss of generality that $P'\in E(\Q)$. Then $\diam{P'}$ is obviously a $\Gal(\overline \Q/ \Q)$-invariant subgroup and we will show that there exists another orthogonal to it. Let $\diam{\mu}=\Gal(M/\Q)$. Then
$$Q'^{\mu}=\alpha P'+\beta Q', $$
$$Q'^{\mu^2}=Q'=\alpha(1+\beta) P'+\beta^2 Q',$$
where $\alpha, \beta \in\{0,1,2\}$. It follows that $\beta=1$ or $2$. If $\beta =1$ then $\alpha=0$, which would imply that $E(\Q)$ has full $3$-torsion, which is impossible. Hence $\beta=2$ and $\diam{Q'+2\alpha P'}$ is a $\Gal(\overline \Q/ \Q)$-invariant subgroup orthogonal to $\diam{P'}$, which shows that $E$ has 2 independent 3-isogenies, giving a contradiction.
\end{proof}

We now move to the study of the $p$-torsion of $E(K)$ for $p>3$.

\begin{lemma}
\label{punatorzija}
For $p>3$ prime, $E(L)[p]=0 \text{ or } \Z / p \Z$.
\end{lemma}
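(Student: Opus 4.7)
The plan is to rule out the possibility that $E(L)$ contains full $p$-torsion for any prime $p>3$, using only the two basic facts that $[L:\Q]\in\{3,6\}$ and that $\Gal(L/\Q)\in\{\Z/3\Z,S_3\}$.

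First, I would assume for contradiction that $E(L)[p]\simeq \Z/p\Z\oplus\Z/p\Z$. By the Galois-equivariance and non-degeneracy of the Weil pairing $E[p]\times E[p]\to \mu_p$, the presence of full $p$-torsion in $E(L)$ forces $\mu_p\subset L$, i.e.\ $\Q(\zeta_p)\subseteq L$. In particular $p-1=[\Q(\zeta_p):\Q]$ must divide $[L:\Q]\in\{3,6\}$, so $p-1\in\{1,2,3,6\}$, which for $p$ prime leaves only $p=2,3,7$. Since $p>3$, only $p=7$ survives this divisibility check, and I also need to handle $p=5$ separately (where divisibility already fails since $4\nmid 6$).

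For $p=5$: $[\Q(\zeta_5):\Q]=4$ does not divide $3$ or $6$, so $\Q(\zeta_5)\not\subseteq L$, and full $5$-torsion over $L$ is impossible.

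For $p=7$: $[\Q(\zeta_7):\Q]=6$ forces $L=\Q(\zeta_7)$. But $\Gal(\Q(\zeta_7)/\Q)\simeq \Z/6\Z$ is cyclic (in particular abelian), whereas $\Gal(L/\Q)$ must be either $\Z/3\Z$ (of the wrong order) or $S_3$ (non-abelian). This is a contradiction, completing the argument.

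There is no real obstacle here: the entire content of the lemma is the observation that the cyclotomic field $\Q(\zeta_p)$ cannot sit inside a cubic field or its $S_3$-closure for any prime $p>3$. The only mildly delicate point is the $p=7$ case, where the degrees happen to match and one must invoke the structure of the Galois group (cyclic vs.\ $S_3$) to finish.
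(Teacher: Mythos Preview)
Your proof is correct and follows exactly the same approach as the paper: use the Weil pairing to force $\Q(\zeta_p)\subseteq L$ and then observe this is impossible for $p>3$. The paper compresses this into a single sentence, whereas you spell out the degree/Galois-group analysis (including the $p=7$ case where $\Z/6\Z\not\simeq S_3$); note that your separate treatment of $p=5$ is redundant, since the divisibility argument already excludes it.
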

\begin{proof}
This follows from the fact that $E(L)[p]\simeq \Z /p\Z \oplus \Z /p\Z$ would, by the existence of the Weil pairing, require the $p$-th cyclotomic field $\Q(\zeta_p)$ to be contained in $L$, which is impossible.
\end{proof}

Next we study for which $p$, $E(M)[p]=0$ implies $E(L)[p]=0$. We prove a more general statement, that applies beyond the case of cubic extensions.

\begin{lemma}
\label{novi}
Let $p,q$ be odd distinct primes, $F_2/F_1$ a Galois extension of number fields such that $\Gal(F_2/F_1)\simeq \Z/q\Z$, and $E/F_1$ an elliptic curve with no $p$-torsion over $F_1$. Then if $q$ does not divide $p-1$ and $\Q(\zeta_p)\not\subset F_2$, then $E(F_2)[p]=0$.
\end{lemma}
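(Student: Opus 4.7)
The plan is to argue by contradiction, assuming $E(F_2)[p]\neq 0$, and to analyze the action of $G:=\Gal(F_2/F_1)\simeq\Z/q\Z$ on the $\F_p$-vector space $V:=E(F_2)[p]\subseteq E[p]\simeq(\Z/p\Z)^2$. Since $V$ is a nonzero $G$-stable subspace of a two-dimensional space, $\dim_{\F_p}V\in\{1,2\}$, and I would handle these two cases separately.

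First I would dispose of the case $\dim_{\F_p}V=2$, i.e.\ $V=E[p]$. The Weil pairing $e_p\colon E[p]\times E[p]\to\mu_p$ is Galois-equivariant and surjective, so if all of $E[p]$ is defined over $F_2$, then $\mu_p\subset F_2$, equivalently $\Q(\zeta_p)\subset F_2$, contradicting the hypothesis.

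Next I would handle the case $\dim_{\F_p}V=1$, i.e.\ $V\simeq\Z/p\Z$. The action of $G$ on $V$ gives a homomorphism
\begin{equation*}
G\longrightarrow\Aut(V)=(\Z/p\Z)^{\times},
\end{equation*}
and the image has order dividing $\gcd(|G|,|(\Z/p\Z)^{\times}|)=\gcd(q,p-1)$. By hypothesis $q\nmid p-1$, and $q$ is prime, so this gcd is $1$; hence $G$ acts trivially on $V$. By Galois theory the fixed points of $G$ acting on $E(F_2)$ are exactly $E(F_1)$, so $V\subset E(F_1)[p]=0$, a contradiction.

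Neither case can be the main obstacle — both are short and direct — so the only thing to be careful about is the order of quantifiers: I would state at the outset that $V$ is automatically $G$-stable because $G$ acts on $E(F_2)$ and $p$-torsion is preserved, and I would explicitly invoke Galois equivariance of the Weil pairing so that the reader sees why $\Q(\zeta_p)\subset F_2$ is forced in the two-dimensional case. With those two ingredients in hand, combining the two cases yields $E(F_2)[p]=0$, proving the lemma.
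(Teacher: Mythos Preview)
Your proposal is correct and follows essentially the same approach as the paper: both argue that $E(F_2)[p]$, being a $G$-stable subgroup of $E[p]$, is either all of $E[p]$ (ruled out via the Weil pairing and the hypothesis $\Q(\zeta_p)\not\subset F_2$) or cyclic of order $p$, in which case the homomorphism $\Z/q\Z\to(\Z/p\Z)^\times$ must be trivial since $q\nmid p-1$, forcing the contradiction $E(F_1)[p]\neq 0$. Your write-up is slightly more explicit about why $V$ is $G$-stable and about the role of the Weil pairing, but the underlying argument is identical to the paper's.
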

\begin{proof}
 Since if one point of order $p$ is defined over $F_2$, then so are all its multiples, it follows that either $p-1$ or $p^2-1$ points of order $p$ are defined over $F_2$, but not over $F_1$. But it is impossible that all $p^2-1$ appear because of $\Q(\zeta_p)\not\subset F_2$.

 Let $\diam{\sigma}=\Gal (F_2/F_1)$, $P$ be a point of order $p$ in $E(F_2)$ and then note that $P^\sigma \neq P$, and that $\Gal (F_2/F_1)$ acts on $\diam{P}$. We have a homomorphism
 $$\Z /q\Z\simeq \Gal(F_2/F_1)\rightarrow \Aut(\diam{P})\simeq \Aut(\Z/ p \Z )\simeq (\Z/p\Z)^\times,$$
 so either $\Gal(F_2/F_1)$ acts trivially on $P$ and therefore $P\in E(F_1)[p]$ or $q$ divides $p-1$.

\end{proof}

In this paper we will use only the special case $q=3$, $F_1=M$ and $F_2=L$ of Lemma \ref{novi}. We also need to show the non-existence of points of order $p^2$ in $E(L)$. Again, we prove a more general statement.

\begin{lemma}
\label{prosirenje}
Let $p$ be an odd prime number, $q$ a prime not dividing $p$, $F_2/F_1$ a Galois extension of number fields such that $\Gal(F_2/F_1)\simeq \Z/q\Z$, $E/F_1$ an elliptic curve, and suppose $E(F_1)\supset \Z/ p\Z$, $E(F_1)\not\supset \Z/ p^2\Z$ and $\zeta_p\not \in F_2$. Then $E(F_2)\not\supset \Z/ p^2\Z$.
\end{lemma}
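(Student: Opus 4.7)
The plan is to assume for contradiction that $E(F_2)$ contains a cyclic subgroup of order $p^2$, generated by some point $Q$, and show that $Q$ must actually be defined over $F_1$. First I would pin down the structure of $E(F_2)[p]$: since $\zeta_p \notin F_2$, the Weil pairing prevents $E(F_2)$ from containing the full $p$-torsion, so $E(F_2)[p] \simeq \Z/p\Z$. Because $E(F_1)$ already contains a subgroup of order $p$, it follows that $E(F_2)[p] = E(F_1)[p] = \langle P \rangle$ for some $P \in E(F_1)$.

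Next, $pQ$ is a point of order $p$ in $E(F_2)$, so $pQ = kP$ for some $k$ coprime to $p$. Replacing $Q$ by a multiple by an inverse of $k$ modulo $p^2$ (which is coprime to $p$, hence does not change the fact that $Q$ has order $p^2$), I may assume $pQ = P$. Now let $\sigma$ generate $\Gal(F_2/F_1)$. Since $P \in E(F_1)$, $p(Q^\sigma - Q) = P^\sigma - P = 0$, so $Q^\sigma - Q \in E(F_2)[p] = \langle P \rangle$. Write $Q^\sigma = Q + aP$ for some $a \in \Z/p\Z$.

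Iterating gives $Q^{\sigma^i} = Q + iaP$, and applying $\sigma^q = \mathrm{id}$ yields $qaP = 0$, i.e.\ $qa \equiv 0 \pmod{p}$. Since $q$ is a prime different from $p$, it is invertible modulo $p$, forcing $a = 0$. Hence $Q^\sigma = Q$, so $Q \in E(F_2)^{\Gal(F_2/F_1)} = E(F_1)$, contradicting $E(F_1) \not\supset \Z/p^2\Z$.

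I do not expect a real obstacle here: the three ingredients are the Weil-pairing remark, the rescaling trick to arrange $pQ = P$, and the cohomological observation that $\sigma$ must fix $Q$ because the resulting cocycle lies in a group on which multiplication by $q$ is an isomorphism. The only thing to be slightly careful about is the reduction $pQ = P$, which relies on $\gcd(k,p)=1$ so that $k$ has an inverse modulo $p^2$; everything else is a direct calculation.
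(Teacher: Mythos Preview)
Your proof is correct. The paper's argument is a repackaging of the same idea in combinatorial rather than cohomological language: after observing (as you do) that $E(F_2)[p^2]\simeq\Z/p^2\Z$ via the Weil pairing, the paper considers the set $S=\{Q\in E(F_2):pQ=P\}$, notes that $|S|=p$, and argues via orbit--stabilizer that the $\Gal(F_2/F_1)$-orbits on $S$ all have length $q$ (a fixed point would be an $F_1$-rational point of order $p^2$), contradicting $q\nmid p$. Your explicit computation $Q^{\sigma^i}=Q+iaP$ is exactly the statement that this action is by translation, and your conclusion $qa\equiv 0\pmod p\Rightarrow a=0$ is the fixed-point assertion. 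So the two arguments are equivalent; yours makes the underlying $H^1(\Z/q\Z,\Z/p\Z)=0$ visible, while the paper's orbit count avoids singling out a particular $Q$.
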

\begin{proof}
Suppose $\Z/p^2\Z\subset E(F_2)$. By the assumption $\zeta_p\not \in F_2$ and the existence of the Weil pairing, it follows that $E(F_2)[p^2]\simeq \Z/p^2\Z$. Let $P\in E(F_1)$ be of order $p$ and $\diam{\sigma}=\Gal(F_2/F_1)$. Let $$S=\{Q\in E(F_2)| pQ=P\}.$$
The set $S$ has $p$ elements, on which $\Gal(F_2/F_1)$ acts. By the Orbit Stabilizer Theorem, the orbits under the action of $\Gal(F_2/F_1)$ have to have length $q$, since if a point in $S$ was left fixed, it would mean that it is $F_1$-rational. This implies that $S$ decomposes into orbits of $q$ elements each, which is a contradiction with our assumption that $q$ does not divide $p$.
\end{proof}

We will again use Lemma \ref{prosirenje} only in the special case $q=3$, $F_1=M$ and $F_2=L$. For $n$ coprime to 6, the existence of a point of order $n$ in $E(K)$ will imply the existence of an $n$-isogeny over $\Q$, as the following lemma shows.

\begin{lemma}
\label{izogenije}
Let $n$ be an odd integer not divisible by $3$ and suppose $E(K)$ has a point of order $n$. Then $E/ \Q$ has an isogeny of degree $n$.
\end{lemma}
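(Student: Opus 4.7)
My plan is to prove the stronger statement that $\langle P\rangle$ itself is $\Gal(\overline\Q/\Q)$-stable. The proof has three stages — reduction to prime powers, a prime case via an $S_3$ obstruction, and an inductive lift.

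Reduction. If $n=\prod p_i^{e_i}$ with distinct primes $p_i\geq 5$, then each $(n/p_i^{e_i})P\in E(K)$ has order $p_i^{e_i}$ and generates a subgroup of $\langle P\rangle$. Once each $\langle (n/p_i^{e_i})P\rangle$ is $\Q$-rational (from the prime-power case), their sum — which equals $\langle P\rangle$ by coprimality of the orders — is $\Q$-rational too.

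Prime case. The $\Gal(\overline\Q/\Q)$-orbit of $P$ in $E[p]$ has size $1$ or $3$. Size $1$ gives $P\in E(\Q)$, so $\langle P\rangle$ is $\Q$-rational. Suppose the size is $3$, with orbit $P_1,P_2,P_3$. If $\langle P\rangle$ is already Galois-stable we are done; otherwise the three subgroups $\langle P_i\rangle$ are distinct lines in $E[p]$, so $P_1$ and $P_2$ are linearly independent and span $E[p]$. Any Galois element fixing $P_1$ and $P_2$ then acts trivially on $E[p]$, making the action on the 3-element orbit faithful and embedding the image of the mod-$p$ Galois representation into $S_3$. In particular, this image has abelianization of order at most $3$. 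On the other hand, the determinant of the mod-$p$ representation is the cyclotomic character, surjecting onto $(\Z/p\Z)^\times$ of order $p-1$; since a surjection onto a cyclic group factors through the abelianization, we would need $p-1\leq 3$, contradicting $p\geq 5$.

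Prime power case ($e\geq 2$), by induction on $e$. The point $p^{e-1}P\in E(K)$ has order $p$, so $\langle p^{e-1}P\rangle$ is $\Q$-rational by the prime case; let $\phi\colon E\to E':=E/\langle p^{e-1}P\rangle$ be the associated $\Q$-rational degree-$p$ isogeny. Then $P':=\phi(P)\in E'(K)$ has order $p^{e-1}$, and by induction applied to the rational elliptic curve $E'$, $\langle P'\rangle$ is $\Q$-rational. Since $\phi(\langle P\rangle)=\langle P'\rangle$ and $|\phi^{-1}(\langle P'\rangle)|=p^{e-1}\cdot p=p^e=|\langle P\rangle|$, we conclude $\langle P\rangle=\phi^{-1}(\langle P'\rangle)$, which is $\Q$-rational. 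The main technical step — and the only nontrivial input — is the $S_3$-obstruction in the prime case; the CRT reduction and inductive lift are then routine.
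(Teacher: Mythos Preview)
Your proof is correct and reaches the same conclusion as the paper's, but by a genuinely different route.

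The paper's argument is shorter and handles composite $n$ in one stroke: it passes to the normal closure $L$ of $K$, writes $E[n]=\langle P,Q\rangle$, and observes that if $P^\sigma=\alpha P+\beta Q$ with $\beta\not\equiv 0\pmod n$ for the generator $\sigma$ of $\Gal(L/M)$, then $\beta Q\in E(L)$ forces $E(L)$ to contain full $\ell$-torsion for some prime $\ell\mid n$; since $\ell\ge 5$ this would put $\zeta_\ell$ in the degree-$\le 6$ field $L$, a contradiction. Thus $\langle P\rangle$ is stable under both $\sigma$ and $\tau$ (the generator of $\Gal(L/K)$), hence under all of $\Gal(\overline\Q/\Q)$.

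You instead reduce to the prime case and argue via the mod-$p$ representation: if $\langle P\rangle$ were not Galois-stable, the three conjugates of $P$ would be a faithful $3$-element orbit for the image of $\rho_p$, embedding that image in $S_3$; but the determinant surjects onto $(\Z/p\Z)^\times$ and must factor through an abelianization of order at most $3$, forcing $p-1\le 3$. The prime-power and CRT reductions are then routine. Both arguments rest on the same underlying obstruction (the cyclotomic character is too large when $p\ge 5$), but the paper exploits it via the Weil pairing inside the concrete field $L$, while you exploit it via the determinant of the global mod-$p$ representation. The paper's version is more economical for this specific lemma; yours isolates the mechanism more cleanly, never refers to $L$ or $M$, and would port more easily to other settings where one only knows the index of the stabilizer rather than the explicit field structure.
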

\begin{proof}
First note that $E(L)$ has a point $P$ of order $n$. Let
$$\diam{\sigma}=\Gal (L/M),\ \diam{\tau}=\Gal (L/K),\ \diam{\sigma,\tau}=\Gal(L/ \Q).$$
As $P$ is $K$-rational, it follows that $P^\tau=P$. Let $E[n]=\diam{P,Q}$ and $P^\sigma=\alpha P+\beta Q \in E(L)$. We have $(n-\alpha)P+P^\sigma=\beta Q \in E(L)$. If $\beta\not\equiv 0 \pmod n$ then $\beta Q$ is a point of order $l|n$ not contained in $\diam{P}$, from which it follows that  that $E(L)$ has full $l$-torsion, which is impossible by Lemma \ref{punatorzija}. We conclude that $\beta \equiv 0 \pmod n$.

Hence
$$P^\mu=kP \text{ for all }\mu \in \Gal(L/ \Q ),$$
and since the action of $\Gal(\overline \Q /\Q)$ on $\diam{P}$ factors through $\Gal(L/\Q)$, it follows that
$$P^\mu=kP \text{ for all }\mu \in \Gal(\overline \Q / \Q ),$$
which means that $E/ \Q$ has an $n$-isogeny.
\end{proof}

In the special case when $K=L$, the conclusion of Lemma \ref{izogenije} follows when $n$ is a multiple of 3.

\begin{lemma}
\label{galizogenije}
Suppose $K=L$, i.e. $K/ \Q$ is a Galois extension. Let $n$ be an odd integer and suppose $E(K)$ has a point of order $n$. Then $E/ \Q$ has an isogeny of degree $n$.
\end{lemma}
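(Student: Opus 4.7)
The plan is to mirror the proof of Lemma \ref{izogenije}, extending it to cover $n$ divisible by $3$ by exploiting the fact that a Galois cubic extension of $\Q$ is totally real. First I would factor $n = 3^a m$ with $\gcd(m,3)=1$; then $3^a P \in E(K)$ has order $m$, and Lemma \ref{izogenije} already produces an $m$-isogeny on $E/\Q$. It will suffice to produce a $3^a$-isogeny as well, since the two $\Gal(\overline\Q/\Q)$-invariant cyclic kernels have coprime orders, hence trivial intersection, and their sum is a $\Gal(\overline\Q/\Q)$-invariant cyclic subgroup of order $n=m\cdot 3^a$.

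For the $3^a$-isogeny I would repeat the izogenije-style computation. Because $\Gal(K/\Q)\simeq \Z/3\Z\simeq A_3$, any defining polynomial of $K$ has square (hence positive) discriminant, so $K$ is totally real and $\zeta_3\notin K$; by the Weil pairing, $E(K)$ cannot contain all of $E[3]$. Take $P_3=mP\in E(K)$ of order $3^a$, a generator $\sigma$ of $\Gal(K/\Q)$, and $Q$ with $E[3^a]=\diam{P_3,Q}$. Writing $\sigma(P_3)=\alpha P_3+\beta Q$ gives $\beta Q=\sigma(P_3)-\alpha P_3\in E(K)$. If $3^a\nmid\beta$, then $\beta Q$ has order some $l$ with $3\mid l$, and together with a suitable multiple of $P_3$ of the same order it generates $E[l]\supset E[3]$ inside $E(K)$, contradicting $\zeta_3\notin K$. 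Hence $\diam{P_3}$ is $\sigma$-stable, and since the $\Gal(\overline\Q/\Q)$-action on $\diam{P_3}$ factors through $\Gal(K/\Q)=\diam{\sigma}$, this subgroup yields the desired $3^a$-isogeny.

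The only substantive new input beyond Lemma \ref{izogenije} is the elimination of full $3$-torsion in $E(K)$: there one appealed to Lemma \ref{punatorzija}, which fails for $p=3$ because $\Q(\zeta_3)=\Q(\sqrt{-3})$ is a quadratic field and can perfectly well sit inside $L$ in the non-Galois $S_3$ setting (for instance $K=\Q(\sqrt[3]{2})$ gives $L\supset M=\Q(\sqrt{-3})$). It is precisely the hypothesis $K=L$, forcing $\Gal(K/\Q)\simeq A_3$, that rules this out via total reality, and this is the main obstacle the proof has to navigate.
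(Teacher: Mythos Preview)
Your proof is correct and essentially matches the paper's: its one-line argument simply reruns Lemma~\ref{izogenije} using that $\Q(\zeta_k)\not\subset K$ for every divisor $k\ge 3$ of $n$, which is precisely your total-reality observation for the case $k=3$. Your split $n=3^a m$ and separate handling of the two factors is a harmless repackaging; one could equally well run the Lemma~\ref{izogenije} computation on $P$ itself all at once.
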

\begin{proof}
The proof follows by a similar argument as in Lemma \ref{izogenije} and by using the fact that $\Q (\zeta_k)$ is not contained in $K$, for any divisor $k\geq 3$ of $n$.
\end{proof}

\section{Proof of Theorem \ref{gltm}}

We are now ready to prove Theorem \ref{gltm}, which we will do in a series of Lemmas and Propositions. Recall that if a point in $E(K)$ has prime order $p$, then $p\leq 13$ (see \cite{Par1, Par2}).

\begin{lemma}
\label{3-syl}
The $3$-Sylow subgroup of $E(K)$ is isomorphic to a subgroup of $\Z/ 9\Z$.
\end{lemma}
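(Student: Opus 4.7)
The plan is to separately exclude the possibilities $\Z/3\Z\oplus\Z/3\Z \subseteq E(K)$ and $\Z/27\Z \subseteq E(K)$; together these force the $3$-Sylow subgroup of $E(K)$ to embed into $\Z/9\Z$.

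\textbf{Full $3$-torsion.} If $E(K)\supset\Z/3\Z\oplus\Z/3\Z$, the Weil pairing $E[3]\times E[3]\to\mu_3$ produces a primitive $3$rd root of unity in $K$, so $\Q(\zeta_3)\subseteq K$. But $[\Q(\zeta_3):\Q]=2$ does not divide $[K:\Q]=3$, a contradiction. This simultaneously rules out every group containing $(\Z/3\Z)^2$, e.g.\ $\Z/3\Z\oplus\Z/9\Z$.

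\textbf{No point of order $27$.} Suppose for contradiction that $P\in E(K)$ has order $27$. Then $3P\in E(K)$ has order $9$, so by Proposition~\ref{9izogenija} the curve $E/\Q$ admits either a $9$-isogeny with kernel $C$ or two independent $3$-isogenies. The cyclic order-$27$ subgroup $\langle P\rangle\subset E[27]$ is $\Gal(\overline\Q/K)$-stable, so its $\Gal(\overline\Q/\Q)$-orbit among the cyclic order-$27$ subgroups of $E[27]$ has length dividing $[K:\Q]=3$, hence $1$ or $3$. If the orbit has length $1$, then $\langle P\rangle$ is $\Q$-rational and $E/\Q$ itself admits a $27$-isogeny, so by Theorem~\ref{izogklas} $E$ has complex multiplication. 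If the orbit has length $3$, then three conjugate cyclic order-$27$ subgroups coexist in $E[27]$; I would analyse the action of a generator $\sigma$ of $\Gal(L/M)$ on $P$ exactly in the style of the proof of Proposition~\ref{9izogenija}, writing $P^\sigma=\alpha P+\beta Q$ in a suitable basis of $E[27]$ and exploiting $\sigma^3=\mathrm{id}$, the fact that $\zeta_{27}\notin L$, together with the mod-$9$ Galois constraint supplied by Proposition~\ref{9izogenija} (and with Lemma~\ref{izogzbrajanje}, which already excludes $81$-isogenies on any $\Q$-isogenous curve) to force a contradiction or to collapse into the CM case. Finally, the CM case is handled by listing the finitely many rational CM elliptic curves admitting a $27$-isogeny -- they correspond to the non-cuspidal rational points of the genus-one modular curve $X_0(27)$ -- and for each of them (and their twists) applying the division polynomial method to verify that no point of order $27$ appears over any cubic field.

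The main obstacle is the orbit-length-$3$ subcase: one has to translate the $E[9]$-level information provided by Proposition~\ref{9izogenija} into sufficiently sharp information about $E[27]$ to close out the three-conjugate configuration. The subsequent CM sub-case is a short finite Magma check, in the same spirit as the $15$-torsion computation at the end of the proof of Theorem~\ref{kvkl}.
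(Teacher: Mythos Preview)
Your Weil-pairing argument excluding $(\Z/3\Z)^2\subset E(K)$ is correct and is exactly half of the paper's proof. The problem is the order-$27$ case. Your orbit-length-$3$ subcase is not proved: the phrases ``I would analyse\ldots in the style of the proof of Proposition~\ref{9izogenija}'' and ``to force a contradiction or to collapse into the CM case'' describe an intention, not an argument, and you yourself label this subcase ``the main obstacle''. Nothing in Proposition~\ref{9izogenija} or Lemma~\ref{izogzbrajanje} obviously closes it, and lifting the mod-$9$ Galois constraint to a mod-$27$ statement strong enough to control three conjugate cyclic $27$-subgroups is precisely where the real work lies. As written, this is a genuine gap, not a detail.

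By contrast, the paper's proof is a single citation: it invokes Theorem~(4.1) of Momose~\cite{mom}, which together with the Weil pairing yields the bound directly. No orbit analysis, no isogeny bookkeeping, no CM enumeration, and no division-polynomial computation are needed. If you wish to avoid citing Momose and give a self-contained argument, you must actually carry out the orbit-length-$3$ analysis (and make the CM twist check precise); otherwise, the citation is the clean route.
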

\begin{proof}
This follows by \cite[Theorem (4.1)]{mom} and by the existence of the Weil pairing.
\end{proof}

\begin{lemma}
\label{5tors}
The $5$-Sylow groups of $E(\Q)$ and $E(K)$ are equal.

\end{lemma}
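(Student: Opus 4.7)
The plan is to prove equality by showing both inclusions; since $E(\Q)[5^\infty]\subseteq E(K)[5^\infty]$ is tautological, I focus on the reverse containment. The first preparatory observation I will make is that $\zeta_5\notin L$: indeed $[\Q(\zeta_5):\Q]=4$, while $[L:\Q]\in\{1,3,6\}$ with $\Gal(L/\Q)$ trivial, cyclic of order three, or $S_3$, none of which admits a subgroup of index $4$. This clears the way to apply Lemmas \ref{novi} and \ref{prosirenje} with $p=5$, $q=3$, $F_1=M$, $F_2=L$, noting that $\Gal(L/M)\simeq\Z/3\Z$ in both the Galois ($K=L$, $M=\Q$) and non-Galois cases.

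Next I will bound the $5$-Sylow subgroup of $E(K)$. By Mazur's theorem when $M=\Q$, and by the Kenku--Momose classification \eqref{kvad} when $M$ is quadratic, $E(M)$ has no point of order $25$. If $E(M)[5]=0$, Lemma \ref{novi} forces $E(L)[5]=0$. Otherwise $E(M)[5]\simeq\Z/5\Z$, and Lemma \ref{prosirenje} rules out $\Z/25\Z\subseteq E(L)$; combined with Lemma \ref{punatorzija} this forces $E(L)[5^\infty]\simeq\Z/5\Z$. Hence the $5$-Sylow of $E(K)$, being contained in that of $E(L)$, is cyclic of order at most $5$.

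Finally I will show that any $5$-torsion of $E(K)$ is $\Q$-rational. Assume $E(K)[5]\neq 0$; then $E(L)[5]\neq 0$, and the contrapositive of Lemma \ref{novi} gives $E(M)[5]\neq 0$. Since $E(M)[5]\subseteq E(L)[5]$ and both are cyclic of order exactly $5$, they coincide. Consequently $E(K)[5]\subseteq E(L)[5]=E(M)[5]\subseteq E(M)$, which combined with the trivial inclusion $E(K)[5]\subseteq E(K)$ yields $E(K)[5]\subseteq E(K)\cap E(M)=E(K\cap M)$. Since $K$ is cubic and $M$ is either $\Q$ or quadratic, $K\cap M=\Q$, so $E(K)[5]\subseteq E(\Q)$, as required.

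I foresee no serious obstacle; the proof is essentially careful bookkeeping combining previously established lemmas. The only verifications needed are that the hypotheses of Lemmas \ref{novi} and \ref{prosirenje}, namely $q\nmid p-1$ and $\zeta_p\notin F_2$, hold for $p=5$ and $q=3$, both of which are immediate from the degree computation above.
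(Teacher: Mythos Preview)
Your proof is correct and follows essentially the same approach as the paper: both use Lemma~\ref{punatorzija} to bound $E(L)[5]$, Lemma~\ref{novi} to reduce to $E(M)[5]$, Lemma~\ref{prosirenje} to rule out $25$-torsion, and the observation that $E(K)\cap E(M)=E(\Q)$ to conclude. The only differences are organizational (you argue by bounding the $5$-Sylow and then showing descent to $\Q$, while the paper cases on $E(\Q)[5]$), and your inclusion of $[L:\Q]=1$ as a possibility is a harmless slip since $K\subseteq L$ forces $[L:\Q]\geq 3$.
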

\begin{proof}
First note that $E(L)$ cannot have full $5$-torsion by Lemma \ref{punatorzija}.

Suppose $E(\Q )[5]\simeq \Z /5\Z$. Then $E(M)\simeq \Z/5\Z$ and Lemma \ref{prosirenje} shows that $E(L)$ cannot have a point of order 25, and hence the $5$-Sylow groups of $E(\Q)$ and $E(L)$ (and hence also $E(K)$) are equal and all isomorphic to $\Z/5\Z$.

Suppose $E(\Q )[5]=0$. If $E(M)[5]=0$, then  $E(L)[5]=0$ (and hence $E(K)[5]=0$) by Lemma \ref{novi}. If $E(M)[5]\simeq \Z /5\Z$, then $0=E(\Q)[5]=E(M)[5] \cap E(K)[5]$, from which it follows that $E(K)[5]=0$ (note that $E(K)$ here cannot have a $5$-torsion point that is not in $E(M)$ because then $E(L)$ would have full $5$-torsion).
\end{proof}

\begin{lemma}
\label{11tors}
There are no points of order $11$ in $E(K)$.
\end{lemma}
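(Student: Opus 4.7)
The plan is to combine Lemma~\ref{izogenije} with Mazur's theorem via a short Galois-character argument, avoiding any computation. Suppose for contradiction that $P\in E(K)$ has order $11$. Since $11$ is odd and not divisible by $3$, the proof of Lemma~\ref{izogenije} actually shows more than the bare existence of an $11$-isogeny on $E/\Q$: it establishes that $P^\mu=k_\mu P$ for every $\mu\in\Gal(\overline\Q/\Q)$, so $\diam{P}$ is $\Gal(\overline\Q/\Q)$-invariant and the action on it is given by a character
$$\chi\colon\Gal(\overline\Q/\Q)\longrightarrow\Aut(\diam{P})\simeq(\Z/11\Z)^\times.$$

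Let $F$ be the fixed field of $\ker\chi$. Since $\ker\chi$ is normal in $\Gal(\overline\Q/\Q)$, $F/\Q$ is Galois with $\Gal(F/\Q)\simeq\mathrm{im}(\chi)$, so $[F:\Q]$ divides $|(\Z/11\Z)^\times|=10$. At the same time, $P^\sigma=P$ is equivalent to $\sigma\in\ker\chi$, so $F$ is precisely the minimal field of definition of $P$. The assumption $P\in E(K)$ then gives $F\subseteq K$, hence $[F:\Q]$ also divides $[K:\Q]=3$. Since $\gcd(10,3)=1$, this forces $[F:\Q]=1$, so $P\in E(\Q)$, contradicting Mazur's theorem.

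No step is computationally demanding; the only thing to verify carefully is that the proof of Lemma~\ref{izogenije} really supplies the Galois character on $\diam{P}$ itself, rather than merely producing an $11$-isogeny somewhere in the isogeny class of $E$. The argument succeeds precisely because $\gcd(10,3)=1$ and because $11$-torsion is forbidden over $\Q$; the same template would be inconclusive at the primes ($7$ and $13$) that actually do occur over cubic fields, where one has $3\mid p-1$ and $\mathrm{im}(\chi)$ can cut out a cubic field sitting inside $K$.
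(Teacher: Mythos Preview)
Your argument is correct. By unpacking the proof of Lemma~\ref{izogenije} you obtain a character $\chi\colon\Gal(\overline\Q/\Q)\to(\Z/11\Z)^\times$ describing the Galois action on $\diam P$; the fixed field $F$ of $\ker\chi$ is then Galois over $\Q$, contained in $K$, and of degree dividing $10$, so $F=\Q$ and Mazur finishes it. All of these steps are justified by what is actually shown in the proof of Lemma~\ref{izogenije}.

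This is, however, a genuinely different route from the paper's. The paper argues via Lemma~\ref{novi}: since $3\nmid 11-1$ and $\zeta_{11}\notin L$, any $11$-torsion in $E(L)$ would already live in $E(M)$; it then quotes the quadratic classification (Theorem~\ref{kvkl}) to rule out $11$-torsion over $M$. In other words, the paper descends in two steps ($L\to M$ via Lemma~\ref{novi}, then $M\to\Q$ via Theorem~\ref{kvkl}), whereas you descend directly from $K$ to $\Q$ in one step using the character on $\diam P$ supplied by the proof of Lemma~\ref{izogenije}. Your approach is slightly more self-contained in that it avoids invoking Theorem~\ref{kvkl}; the paper's approach has the virtue of reusing its two general descent lemmas as black boxes. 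Both ultimately rest on Mazur's theorem over $\Q$.
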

\begin{proof}
By Lemma \ref{novi}, $E(L)$ has $11$-torsion only if $E(M)$ has $11$-torsion, which is never true, as rational elliptic curves cannot have $11$-torsion over $\Q$ or over a quadratic field, by Theorem \ref{kvkl}. Hence $E(K)$ has no $11$-torsion.
\end{proof}

\begin{lemma}
\label{elim}
$E(K)$ has no points of order $35,$ $49,$ $65,$ $91$ or $169$.
\end{lemma}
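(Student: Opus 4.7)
The plan is to reduce each case to the classification of cyclic isogenies over $\Q$ given by Theorem \ref{izogklas}. Every integer in the list $\{35,49,65,91,169\}$ is odd and coprime to $3$, so Lemma \ref{izogenije} applies uniformly: if $E(K)$ contained a point of order $n$ for any such $n$, then $E/\Q$ would admit a cyclic $n$-isogeny defined over $\Q$.

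I would then simply check against the list of possible isogeny degrees over $\Q$. By Theorem \ref{izogklas}, a rational $n$-isogeny forces $n\le 19$ or $n\in\{21,25,27,37,43,67,163\}$. None of $35,49,65,91,169$ lie in this set (even in the CM case), so each gives a contradiction.

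The write-up is therefore just a one-line invocation per case; there is no real obstacle. I would organize it as a single paragraph: assume for contradiction that $E(K)$ has a point of order $n\in\{35,49,65,91,169\}$; apply Lemma \ref{izogenije} (using that $n$ is odd and $3\nmid n$) to obtain a rational $n$-isogeny; conclude by Theorem \ref{izogklas} that no such $n$ is permitted.
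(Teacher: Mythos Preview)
Your proposal is correct and matches the paper's own proof exactly: the paper simply writes ``This follows by Lemma \ref{izogenije} and Theorem \ref{izogklas}.'' Your observation that each $n\in\{35,49,65,91,169\}$ is odd and coprime to $3$ is precisely what makes Lemma \ref{izogenije} applicable, and none of these $n$ appear in the list of Theorem \ref{izogklas}.
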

\begin{proof}
This follows by Lemma \ref{izogenije} and Theorem \ref{izogklas}.
\end{proof}

\begin{lemma}
\label{15tors}
There exists no rational elliptic curves with points of order $15$ or $16$ over a cubic field.
\end{lemma}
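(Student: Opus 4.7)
The plan is to handle the two cases separately. For 16-torsion, I would split on whether $E(\Q)$ has a point of order $4$. In the affirmative case, Lemma~\ref{2-syl} identifies the 2-Sylow subgroup of $E(K)$ with that of $E(\Q)$, which by Mazur's theorem has exponent at most $8$, ruling out a point of order $16$. Otherwise, Corollary~\ref{nema4} directly forbids $8$-torsion in $E(K)$, which is even stronger.

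For 15-torsion, the strategy is to push the torsion down to the quadratic subfield $M$ of the Galois closure $L$ and then invoke Theorem~\ref{kvkl}. Suppose $E(K) \supset \Z/15\Z$. Since $E(K) \supset \Z/5\Z$, Lemma~\ref{5tors} yields $E(\Q) \supset \Z/5\Z$, so $E(M) \supset \Z/5\Z$. Since also $E(K) \supset \Z/3\Z$ and $K \subset L$, Lemma~\ref{nova3tor} applied to the degree-$3$ cyclic extension $L/M$ gives $E(M) \supset \Z/3\Z$, and combining yields $E(M) \supset \Z/15\Z$. In the Galois case $M = \Q$, which contradicts Mazur's theorem. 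In the non-Galois case $M$ is quadratic, and Theorem~\ref{kvkl}(c) pins $E$ down to one of $50B1, 50A3, 50B2, 450B4$ with $M$ the associated field $\Q(\sqrt{5})$ or $\Q(\sqrt{-15})$. The rational $5$-torsion already established eliminates the two curves ($50B1$ and $50B2$) whose rational torsion is only $\Z/3\Z$, leaving $E \in \{50A3, 450B4\}$.

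The final step is a finite computation for these two specific curves, and this is where I expect the main (though still routine) work to lie. I would apply the division polynomial method to $\psi_3$: factor it over $\Q$ and for each factor of degree $1$ or $3$ check whether the associated $y$-coordinate lies in the same field. I anticipate that the rational root of $\psi_3$, coming from the rational $3$-isogeny kernel, pairs with a $y$-coordinate in $\Q(\sqrt 5)$ or $\Q(\sqrt{-15})$ and in no cubic field, and that the complementary cubic factor of $\psi_3$ produces an $x$-coordinate in a cubic field $K'$ over which $f(x)$ fails to be a square, so the corresponding $3$-torsion point is defined only over the degree-$6$ extension $K'(\sqrt{f(x)})$. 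Together this would rule out $3$-torsion, and hence $15$-torsion, over any cubic field, completing the proof.
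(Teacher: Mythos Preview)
Your approach is correct and essentially the same as the paper's. For the $16$-torsion case, the paper splits on whether the $2$-Sylow subgroup of $E(\Q)$ is trivial (rather than on the presence of a point of order $4$), but both case divisions lead immediately to Lemma~\ref{2-syl} and Corollary~\ref{nema4} in the same way. For the $15$-torsion case, the paper likewise reduces to $E(M)\supset\Z/15\Z$ (citing Lemmas~\ref{novi} and~\ref{nova3tor} rather than Lemma~\ref{5tots}), invokes Theorem~\ref{kvkl}(c) to obtain the four candidate curves, and then simply runs the division polynomial method on all four to rule out $15$-torsion over any cubic field. Your additional use of Lemma~\ref{5tors} to force $E(\Q)\supset\Z/5\Z$ and thereby cut the list to two curves before checking only $\psi_3$ is a small refinement of the same strategy, not a genuinely different route.
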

\begin{proof}
As $E(L)[5]=E(M)[5]$ and $E(M)[3]=0\Longrightarrow E(L)[3]=0$ by Lemmas \ref{novi} and \ref{nova3tor}, it follows that the only way for $E(L)$ to have $15$-torsion is for $E(M)$ to have $15$-torsion.

It follows that, by Theorem \ref{kvkl} c), $E$ is 50B1, 50B2, 50A3 or 450B4. By the division polynomial method, we find that none of these curves have points of order $15$ over any cubic field.\\

If $E(\Q)$ has a non-trivial $2$-Sylow group, then the $2$-Sylow subgroups of $E(\Q)$ and $E(K)$ are equal by Lemma \ref{2-syl} and hence $E(K)$ has no points of order $16$. If the $2$-Sylow subgroup of $E(\Q)$ is trivial, then by Corollary \ref{nema4}, $E(K)$ has no points of order $8$.
\end{proof}

\begin{proposition}
\label{21tors}
The elliptic curve $162B1$ has torsion isomorphic to $\Z/21 \Z$ over $\Q(\zeta_9)^+$. This is the unique pair $(E,K)$ of a rational elliptic curve $E$ and a cubic field such that $E(K)$ has a point of order $21$.
\end{proposition}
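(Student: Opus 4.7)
The plan is as follows. Suppose $E(K)$ contains a point of order $21$; then $E(K)$ contains points of order $7$ and of order $3$. Lemma~\ref{izogenije} applied with $n=7$ immediately gives a rational $7$-isogeny on $E$. For the $3$-part, the $3$-torsion point in $E(K)\subseteq E(L)$ forces, by Lemma~\ref{nova3tor}, a point of order $3$ in $E(M)$. When $K/\Q$ is Galois we have $M=\Q$ and the $3$-torsion point is already $\Q$-rational; otherwise $M$ is quadratic and Lemma~\ref{spust} (with $n=3$) produces a rational $3$-isogeny. Either way $E/\Q$ admits a $3$-isogeny, and since $\gcd(3,7)=1$ the sum of the kernels of the $3$- and $7$-isogenies is a $\Gal(\overline\Q/\Q)$-stable cyclic subgroup of order $21$. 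Hence $E/\Q$ admits a rational $21$-isogeny.

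By Theorem~\ref{izogklas} the $j$-invariant of $E$ is one of only finitely many values, corresponding to the non-cuspidal rational points of $X_0(21)$; these are realized by the four curves in the rational isogeny class $162B$. Thus $E$ is $\Q$-isomorphic to a quadratic twist of one of $162B1$, $162B2$, $162B3$, $162B4$. To handle all twists uniformly we use the division polynomial method. Fix any of the four base curves, written in short Weierstrass form as $y^2=f(x)$, and factor its $21$-st division polynomial $\psi_{21}$ over $\Q$. Let $g(x)$ be an irreducible cubic factor of $\psi_{21}$, let $x_0$ be a root of $g$, and set $F=\Q(x_0)$, a cubic field. For the quadratic twist of the base curve by $d\in\Q^\ast$, the corresponding $21$-torsion point has $x$-coordinate $dx_0\in F$ (for every $d$) and $y$-coordinate $d\sqrt{d\,f(x_0)}$, which lies in $F$ if and only if $d\,f(x_0)$ is a square in $F$. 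Hence some twist of the base curve obtains a $21$-torsion point over a cubic field precisely when the class of $f(x_0)$ in $F^\ast/(F^\ast)^2$ is represented by a rational number, and in that case necessarily $K=F$ and $d$ is determined modulo squares.

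A Magma computation then shows that, running through all cubic factors of $\psi_{21}$ for the four curves in $162B$, the square-class condition is satisfied in exactly one instance: for the base curve $162B1$ with $d=1$, yielding $F=\Q(\zeta_9)^+$. A direct verification (by the same division polynomial method applied to $162B1$) then confirms that $162B1$ has $\Z/21\Z$-torsion over $\Q(\zeta_9)^+$, which finishes the proof. The main technical point is the twist bookkeeping: for each of the cubic factors arising, one must verify that $f(x_0)$ fails to be a rational multiple of a square in $F$, so that no unexpected non-trivial twist produces another pair $(E,K)$ with a $21$-torsion point.
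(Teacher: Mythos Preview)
Your argument is correct and follows the paper's overall strategy: establish a rational $21$-isogeny on $E$, reduce to the four $j$-invariants coming from $X_0(21)(\Q)$, and finish by a finite computation. Your derivation of the $3$-isogeny (splitting into the Galois/non-Galois cases for $K/\Q$ and invoking Lemma~\ref{spust}) is in fact more explicit than the paper's terse ``since it has a $3$-torsion point, it also has a $21$-isogeny''.

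The final computation is organized differently. The paper first singles out the four twists with a $\Q$-rational point of order $3$ (namely $162B1$, $162B3$, $162C1$, $162C3$, using that the $162C$ class is the $-3$-twist of $162B$) and then checks, via the $7$-division polynomial, which of these four curves gains a $7$-torsion point over some cubic field. You instead factor $\psi_{21}$ for each base curve in $162B$ and test the rational-square-class condition $f(x_0)\in\Q^\ast\cdot(F^\ast)^2$ to sweep over all quadratic twists at once. Both routes are valid; the paper's $3$-torsion filter cuts the search down before the cubic-field check, while your approach treats the twist parameter uniformly. One small caveat: irreducible cubic factors of $\psi_{21}$ can also arise from $\psi_3$ or $\psi_7$, so your ``exactly one instance'' should be read as ``exactly one instance yielding a point of order $21$'' --- your closing direct verification that $162B1(\Q(\zeta_9)^+)_{\mathrm{tors}}\simeq\Z/21\Z$ already handles this.
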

\begin{proof}
By Lemma \ref{nova3tor}, if $E(M)_{tors}= 0, \text{ or }\Z /7 \Z $, then $E(L)[3]=0$. Suppose now that $E(K)[21]\supset \Z/21 \Z$. By Lemma \ref{izogenije}, it follows that $E/\Q$ has an isogeny of degree 7, and since it has a $3$-torsion point, it also has a $21$-isogeny. There are 4 curves (up to $\overline \Q$-isomorphism) with a 21-isogeny \cite[p.78--80]{mod}. These are the curves in the 162B or 162C isogeny classes. Note that the 162B isogeny class is a $-3$ twist of the 162C class.

By the division polynomial method we find that the only twists of the curves in the 162B and 162C isogeny classes with non-trivial $3$-torsion are the curves 162C1, 162C3, 162B1 and 162B3. By the division polynomial method, we find that only 162B1 has a $7$-torsion point over a cubic field, the field $\Q(\zeta_9)^+$, which is generated by $x^3-3x^2+3$.
\end{proof}

Note that since 162B1 is the unique curve with $21$-torsion over any cubic field and since it has torsion exactly $\Z / 21 \Z$, this means that there exist no points of order $21n$ on rational elliptic curves over cubic fields, for any integer $n\geq 2$.

\begin{remark}
Note that in \cite[Remark 2]{naj2} we misstated that from the fact that
\begin{equation}X_0(21)(\Q (\zeta_9)^+)=X_0(21)(\Q)\label{greska}\end{equation}
(note that $K_7=\Q (\zeta_9)^+,$ using the notation of \cite{naj2}) one can conclude that there are no elliptic curves with $21$-torsion over $\Q (\zeta_9)^+$, while what should have been written is that from (\ref{greska}) we can determine whether $21$-torsion appears over $\Q (\zeta_9)^+$ by checking whether twists of rational elliptic curves with rational $21$-isogeny have $21$-torsion over $\Q (\zeta_9)^+$. The point of the remark, that if we could find out whether $Y_1(25)(\Q(\zeta_9)^+)=\emptyset$, then we could completely classify the possible torsion groups of elliptic curves over $\Q(\zeta_9)^+$, remains true.
\end{remark}

\begin{lemma}
\label{20-24-26-28-36-39tors}
$E(K)$ does not have points of order $20$, $24$, $26$, $28$, $36$ or $39$.
\end{lemma}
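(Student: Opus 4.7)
The plan is to handle each of the six orders $n\in\{20,24,26,28,36,39\}$ by contradiction. From a point of order $n$ in $E(K)$ I extract two $\Q$-rational cyclic isogenies of coprime degrees $a$ and $b$ with $ab=n$, and Lemma \ref{izogzbrajanje} then produces a $\Q$-isogenous curve carrying a cyclic $n$-isogeny. Since none of these six values lies on the Mazur--Kenku list in Theorem \ref{izogklas} (even for CM curves), each case yields the desired contradiction.

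For the odd factor the sources are uniform. A $13$-torsion point in $E(K)$ gives a rational $13$-isogeny via Lemma \ref{izogenije} (cases $n=26,39$); a $7$-torsion point likewise (case $n=28$); a $5$-torsion point over $K$ descends to $E(\Q)$ by Lemma \ref{5tors} and hence gives a rational $5$-isogeny (case $n=20$); and a $3$-torsion point descends to $E(M)$ via Lemma \ref{nova3tor} and yields a rational $3$-isogeny via Lemma \ref{spust} (cases $n=24,39$). For $n=36$, Proposition \ref{9izogenija} supplies either a rational $9$-isogeny or two independent rational $3$-isogenies, the latter combining through Lemma \ref{izogzbrajanje} into a $9$-isogeny on an isogenous curve. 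For the even factor, Corollary \ref{nema4} combined with Lemma \ref{2-syl} typically forces $E(\Q)$ itself to contain a point of order $4$ or $8$, supplying the corresponding rational isogeny; when this would require $E(\Q)$ to carry $3$- and $8$-torsion simultaneously (as for $n=24$), Mazur's theorem rules it out and the $3$-part instead comes through Lemma \ref{spust}.

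The main obstacle is the exceptional $2$-primary branch permitted by Corollary \ref{nema4}: $E(\Q)[2]=0$ while $E(K)$ acquires a point of order $4$, which is consistent only when $M=\Q(i)$, $\Delta(E)\in -(\Q^*)^2$, and $j(E)=-4t^3(t+8)$. In this branch there is no rational $2$- or $4$-isogeny to feed into Lemma \ref{izogzbrajanje}. The escape, for $n\in\{20,26,28,36\}$, is that the odd-degree argument also constrains $M$: the character argument inside Lemma \ref{izogenije} (applied with $n=5,7,13$) shows that whenever the odd torsion is not already $\Q$-rational, $K/\Q$ must be cyclic Galois and hence $M=\Q$; and the two-independent-$3$-isogenies scenario in the proof of Proposition \ref{9izogenija} pins $M=\Q(\sqrt{-3})$. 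Neither is compatible with $M=\Q(i)$. The residual sub-case is that the odd torsion is already $\Q$-rational, putting $E(\Q)_{tors}\in\{\Z/5\Z,\Z/7\Z\}$ for $n=20,28$ respectively; intersecting the corresponding one-parameter family with the $j$-locus $j(E)=-4t^3(t+8)$ and the discriminant condition $\Delta(E)\in -(\Q^*)^2$ leaves only finitely many $\Q$-isomorphism classes, which are then dispatched by the division polynomial method. The analogous residual case for $n=26$ reduces to the coincidence $K=\Q(E[2])=\Q(P_{13})$ of two a priori unrelated cyclic cubic fields, which, intersected with $X_0(13)(\Q)$, again leaves only finitely many curves to inspect.
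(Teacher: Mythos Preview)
Your isogeny-assembly strategy is correct and matches the paper's argument exactly for $n=24$ and $n=39$, and for the ``main branch'' of $n=20,26,28,36$ where the even-order torsion already lives in $E(\Q)$. The identification of the exceptional $2$-primary branch through Corollary~\ref{nema4}, and the observation that Lemma~\ref{izogenije} forces $K/\Q$ Galois (hence $M=\Q$) whenever the $5$-, $7$- or $13$-torsion is not $\Q$-rational, are also correct and are used in the paper as well.

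There is, however, a genuine gap at $n=36$. In the exceptional branch ($E(\Q)[2]=0$, $M=\Q(i)$) you dispose of the two-independent-$3$-isogenies scenario via $M=\Q(\sqrt{-3})$, but you say nothing about the rational $9$-isogeny scenario. Lemma~\ref{izogenije} does not apply to $9$-torsion, and Proposition~\ref{9izogenija} gives no constraint on $M$ in that case. One can in fact argue (as in the proof of Proposition~\ref{9izogenija}) that if $M\neq\Q(\sqrt{-3})$ then $\langle P_9\rangle$ is $\Gal(\overline\Q/\Q)$-invariant, whence $\Q(P_9)/\Q$ is Galois and contained in $K$, forcing $P_9\in E(\Q)$ since $M\neq\Q$; so a residual sub-case with $E(\Q)_{\tors}\simeq\Z/9\Z$ arises here too. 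You omit it from your list $\{\Z/5\Z,\Z/7\Z\}$.

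The second issue is that the residual sub-cases are asserted rather than proved. You claim that intersecting the one-parameter Kubert family with the $j$- and $\Delta$-conditions ``leaves only finitely many'' curves to dispatch by division polynomials, but you neither justify finiteness nor perform the check. The paper does the work: it imposes only the discriminant condition $\Delta(E)\in -(\Q^*)^2$ (for $n=20,28,36$) or $\Delta(E)\in(\Q^*)^2$ (for $n=26$) on the relevant family and lands on an explicit curve---a rank-$0$ elliptic curve for $n=20,26,36$, and a genus-$2$ curve handled by Chabauty for $n=28$---which in every case has \emph{no} non-trivial rational points. So there is nothing left to dispatch, but this is the substance of the proof and cannot be waved away. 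Your phrase ``intersected with $X_0(13)(\Q)$, again leaves only finitely many curves'' for $n=26$ is similarly an assertion, not an argument; $X_0(13)$ has genus $0$ and infinitely many rational points, so finiteness comes only from the discriminant constraint, which must be analysed explicitly.
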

\begin{proof}
Suppose $E(K)$ has a $20$-torsion point. Then $E(M)$ has to have a $5$-torsion point by Lemma \ref{novi}, and by (\ref{zbrojkvad}) either $E^d(\Q)$ or $E(\Q)$ has a $5$-torsion point, where $M=\Q(\sqrt d)$. We can suppose without loss of generality that $E(\Q)$ has a $5$-torsion point, otherwise we proceed with the proof using $E^d$ instead of $E$.

Thus, by \cite[Table 3.]{kub}, $E/\Q$ has a model
\begin{equation}
\label{jed5}
E:y^2+(1-t)xy -ty=x^3-tx^2 \text{ for some } t\in \Q^*.
\end{equation}
If $E(\Q)$ had a $4$-torsion point, it would follow that it has a $20$-torsion point which is impossible. Thus by Corollary \ref{nema4}, it follows that $E(\Q)[2]=0$ and $\Delta(E)=-k^2$ for some $k\in \Q^*$.
Hence
$$-k^2=\Delta(E)=t^5(t^2-11t-1)$$
for some $t\in \Q^*$, which is equivalent to
$$X:y^2=t^3+11t^2-t$$
having rational points with $t\in \Q^*$. But $X(\Q)=\{0, (0,0)\}$.\\

If $E(K)$ had a $24$-torsion point, then Corollary \ref{nema4} would imply that $E(\Q)$ has a non-trivial $2$-Sylow group, and from this fact and Lemma \ref{2-syl} it follows that $E(\Q)$ has a point of order 8. By Lemma \ref{nova3tor}, $E(M)$ has $3$-torsion point and hence a $24$-torsion point which is in contradiction with Theorem \ref{kvkl}, since $M/\Q$ is quadratic.\\

Suppose $E(K)\supset \Z /26 \Z$. It follows by Lemma \ref{izogenije} that $E/ \Q$ has an isogeny of degree 13, and this implies that $E( \Q)[2]= 0$, since
by Theorem \ref{izogklas} there are no elliptic curves with 26-isogenies over $\Q$. Because $E$ has a 13-isogeny, but no $13$-torsion over $\Q$, it follows that $\Q(\diam{P})/\Q$ is Galois and non-trivial. Since this is contained in $K$ and $K$ is of prime degree, we must have $\Q(\diam{P})=K$ and therefore $K/\Q$ is Galois.


From the facts that $K/\Q$ is cubic and Galois, $E(\Q)[2]=0$ and $E(K)[2]\neq 0$, it follows that $E(K)$ has full $2$-torsion and hence $E(K)\supset \Z /2 \Z \oplus\Z /26 \Z$.

By \cite{klo,mes} an elliptic curve with a $13$-isogeny over $\Q$ has $j$-invariant
$$j=\frac{(t^2+5t+13)(t^4+7t^3+20t^2+19t+1)^3}{t}, \ t\in \Q^\times.$$
It follows that $E$ is a quadratic twist of an elliptic curve $E_0$ with discriminant
$$\Delta(E_0)=t(t^2+5t+13)^2(t^4+7t^3+20t^2+19t+1)^6(t^2+6t+13)^9$$
$$\times(t^6+10t^5+46t^4+108t^3+122t^2+38t-1)^6,$$
and thus $\Delta(E)=u^{12}\Delta(E_0)$ for some $u\in \Q^\times$. The curve $E$ gains full $2$-torsion over a cubic field only if $\Delta(E)$ is a square which happens only if
$$X:y^2=x(x^2+6x+13) \text{ for some }x,y \in \Q^\times$$
has solutions. But $X(\Q)=\{0, (0,0) \}$, and hence that is impossible.\\

Suppose $E(K)$ has a $28$-torsion point. It follows that $E/\Q$ has a 7-isogeny by Lemma \ref{izogenije}. If $E(\Q )$ had a $4$-torsion point, this would imply that $E/ \Q$ has a 28-isogeny, which is impossible by Theorem \ref{izogklas}. Thus $E(\Q)$ does not have a $4$-torsion point and by Corollary \ref{nema4} it follows that $K$ is not a Galois extension of $\Q$ and $\Delta(E)=-k^2$, for some $k\in \Q^*$.

Note that since $E$ cannot have 2 independent rational 7-isogenies (see \cite[Proposition III.2.1.]{kub}), it follows that the kernel of the rational 7-isogeny is equal to $E(K)[7]$. As the points in the kernel of the rational 7-isogeny are defined over a Galois extension of $\Q$, it follows that they must be defined already over $\Q$, since they cannot be $K$-rational but not $\Q$-rational (because $K/ \Q$ is not Galois).

Hence $E(\Q)$ has points of order 7, and by \cite[Table 3.]{kub}, it follows that $E$ is of the form
\begin{equation}
\label{jed7}
E:y^2 + (-t^2 + t + 1)xy + (-t^3 + t^2)y = x^3 +(-t^3 + t^2)x^2, \text{ for some } t\in \Q, t \not\in\{0,1 \}.
\end{equation}
and that
$$-k^2=\Delta(E)=t^7(t-1)^7(t^3-8t^2+5t-1),$$
which is equivalent to
$$X:y^2=t(t+1)(t^3+8t^2+5t+1),$$
having rational points such that $t\not\in \{0,-1\}$. But the Jacobian $J$ of $X$ has rank 0 over $\Q$ and it is an easy computation in Magma (using the \texttt{Chabauty0} function) to show that $$X(\Q)=\{\infty, (0,0),(-1,0)\},$$ and thus completing the proof that there are no rational elliptic curves with $28$-torsion over a cubic field.\\

Suppose $E(K)$ has a $36$-torsion point. This implies that, by Corollary \ref{nema4}, $E(\Q)$ has either a point of order $4$ or no $2$-torsion. Also, by Proposition \ref{9izogenija}, $E/ \Q$ has to have a $9$-isogeny or $2$ independent isogenies of degree $3$.

Suppose first that $E(\Q)$ has a $4$-torsion point. If $E/ \Q$ had a $9$-isogeny, this would imply, by Lemma \ref{izogzbrajanje} that $E/ \Q$ is isogenous to a rational curve which has a 36-isogeny, which is impossible by Theorem \ref{izogklas}. On the other hand, by \cite[Main Result 2.]{kub}, an elliptic curve with 2 independent $3$-isogenies cannot have a $4$-torsion point.

Suppose now that $E(\Q)$ has no $2$-torsion. We split this case into $2$ subcases: when $E(\Q)$ has a rational $9$-isogeny, and when $E$ has $2$ independent rational isogenies of degree $3$.

Suppose $E$ has a $9$-isogeny. Then, by \cite{kub, ing}, $E$ is a twist of an elliptic curve $E_0$ with $j$-invariant
\begin{equation}
\label{jed9}
j=\frac{t^{12} - 72t^9 + 1728t^6 - 13824t^3}{t^3 - 27},\ t\in \Q \backslash \{0,3 \}
\end{equation}
and
$$\Delta(E_0)=2^{12}3^6(t^3 - 27),$$
and since $E$ has to be a twist of $E_0$, it follows that $\Delta(E)=u^{12}\Delta(E_0)$, for some $u\in \Q^\times$. By Corollary \ref{nema4}, the curve $E$ gains a $4$-torsion point over a cubic field only if $-y^2=\Delta(E)\neq 0$ has solutions, which is equivalent to
$$X:y^2=t^3+27,\ t\in \Q \backslash \{0,-3 \}, y\in \Q$$
having a solution. But $X(\Q)=\{0, (-3,0) \}$.

Suppose now that $E$ has 2 independent rational 3-isogenies and that $E$ gains a point of order 9 over $K$. By the same type of argument as in the proof of Proposition \ref{9izogenija}, it follows that $M=\Q(\sqrt{-3})$. Since $E$ has no rational $4$-torsion, but has a $4$-torsion point over $K$, by Corollary \ref{nema4}, it follows that $M=\Q(i)$, which is a contradiction.\\

If $E(K)$ had a $39$-torsion point, this would imply that $E(M)$ has a $3$-torsion point by Lemma \ref{nova3tor}, from which it would follow, by Lemma \ref{spust}, that $E/ \Q$ has a 3-isogeny. Also, $E/\Q$ would have a $13$-isogeny by Lemma \ref{izogenije}. But this would imply that there is an elliptic curve over $\Q$ with a 39-isogeny by Lemma \ref{izogzbrajanje}, which is impossible by Theorem \ref{izogklas}.
\end{proof}

\begin{lemma}
\label{neciklickators}
$E(K)$ cannot have subgroups isomorphic to $\Z /2\Z \oplus \Z /10 \Z$, $\Z /2\Z \oplus \Z /12 \Z$ or $\Z /2\Z \oplus \Z /18 \Z$.
\end{lemma}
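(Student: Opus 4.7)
The three candidate subgroups all contain full 2-torsion, so the first step of the plan is to analyze how $E(K)\supseteq (\Z/2\Z)^2$ can arise for a cubic $K$. If $E(\Q)[2]=(\Z/2\Z)^2$ (scenario (i)), then Lemma \ref{2-syl} forces the 2-Sylow of $E(K)$ to equal that of $E(\Q)$. If $E(\Q)[2]=\Z/2\Z$ (scenario (ii)), the same lemma yields the immediate contradiction $E(K)[2]=\Z/2\Z$. If $E(\Q)[2]=0$ (scenario (iii)), then the irreducible 2-division polynomial must split over the cubic $K$, forcing $K/\Q$ to be cyclic Galois, so $M=\Q$ and $\Delta(E)\in (\Q^*)^2$.

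For $\Z/2\Z \oplus \Z/10\Z$, Lemma \ref{5tors} gives $E(\Q)\supseteq \Z/5\Z$. Scenario (i) immediately contradicts Mazur. In scenario (iii) one has $E(\Q)_{tors}=\Z/5\Z$, and the Kubert parametrization \cite[Table 3]{kub} gives $\Delta(E)=t^5(t^2-11t-1)$; the square condition on $\Delta(E)$ reduces the problem to showing that the auxiliary curve $C: y^2=t^3-11t^2-t$ has only $\{\infty,(0,0)\}$ as its rational points, which I would verify in Magma.

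For $\Z/2\Z\oplus \Z/12\Z$, the presence of a 4-torsion point in $E(K)$ rules out scenario (iii) via Corollary \ref{nema4} (which demands $M=\Q(i)\neq\Q$), so scenario (i) holds and Lemma \ref{2-syl} forces $E(\Q)\supseteq \Z/2\Z \oplus \Z/4\Z$. Lemma \ref{nova3tor} supplies a 3-torsion point in $E(M)$; the subcase $M=\Q$ violates Mazur, so $M$ is quadratic and (\ref{zbrojkvad}) yields a twist with $E^d(\Q)\supseteq \Z/2\Z\oplus \Z/6\Z$. Via the $M$-isomorphism $E\simeq E^d$, the group $E(M)_{tors}$ contains both $E(\Q)_{tors}$ and $E^d(\Q)_{tors}$, so the subcase $E(\Q)_{tors}=\Z/2\Z\oplus \Z/8\Z$ produces an element of order 24 in $E(M)$, excluded by Theorem \ref{kvkl}. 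The residual possibility, $E(\Q)_{tors}=\Z/2\Z\oplus \Z/4\Z$ together with a rational 3-isogeny (supplied by Lemma \ref{spust} from $E(M)[3]\neq 0$), corresponds to a specific modular curve; the final step would be to combine the Kubert parametrization of $\Z/2\Z\oplus\Z/4\Z$-curves with the 3-division polynomial to reduce to a Mordell--Weil calculation on an explicit auxiliary curve. I expect this residual step to be the main obstacle.

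For $\Z/2\Z\oplus \Z/18\Z$, Proposition \ref{9izogenija} yields two sub-cases. In the 9-isogeny sub-case, the $j$-parametrization \eqref{jed9} together with the square condition from scenario (iii) reduces the problem to the rational points on $y^2 = t^3 - 27$, while scenario (i) requires ruling out the very restrictive condition of a rational 9-isogeny combined with rational full 2-torsion, which can be handled directly from the parametrization (compare the $\Z/36\Z$ argument in Lemma \ref{20-24-26-28-36-39tors}). In the two-independent-3-isogenies sub-case, adapting the argument of Proposition \ref{9izogenija} forces $M=\Q(\sqrt{-3})$ and $E(\Q)\supseteq \Z/3\Z$; scenario (iii) is then incompatible with $M=\Q(\sqrt{-3})$, so scenario (i) must hold and $E(\Q)_{tors}=\Z/2\Z\oplus \Z/6\Z$. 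Lemma \ref{izogzbrajanje} then produces a 3-isogenous rational curve carrying a 9-isogeny, returning the analysis to the previous sub-case.
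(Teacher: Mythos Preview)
Your treatment of $\Z/2\Z\oplus\Z/10\Z$ is essentially the paper's. The genuine gap is in the $\Z/2\Z\oplus\Z/12\Z$ case, where you leave the residual situation ($E(\Q)\supseteq\Z/2\Z\oplus\Z/4\Z$ together with a rational $3$-isogeny) unfinished and call it ``the main obstacle.'' In fact this is where the paper's argument is cleanest: once $E(\Q)\supseteq\Z/2\Z\oplus\Z/4\Z$ and $E/\Q$ has a $3$-isogeny, the point of order $4$ together with the $3$-isogeny kernel gives a rational cyclic $12$-isogeny, and the \emph{other} rational $2$-torsion point gives an independent $2$-isogeny. Lemma~\ref{izogzbrajanje} then produces a rational elliptic curve with a $24$-isogeny, contradicting Theorem~\ref{izogklas}. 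No parametrization or auxiliary Mordell--Weil computation is needed; you were already invoking Lemma~\ref{izogzbrajanje} elsewhere but missed that it closes this case in one line.

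Your $\Z/2\Z\oplus\Z/18\Z$ argument also has a gap in the two-independent-$3$-isogenies sub-case: the step ``Lemma~\ref{izogzbrajanje} then produces a $3$-isogenous rational curve carrying a $9$-isogeny, returning the analysis to the previous sub-case'' is not justified, since nothing guarantees that this isogenous curve still has $\Z/2\Z\oplus\Z/18\Z$ over $K$. The paper disposes of scenario~(i) for both sub-cases simultaneously by citing Kubert: a curve over $\Q$ with full rational $2$-torsion can have neither a $9$-isogeny \cite[Table~2]{kub} nor two independent $3$-isogenies \cite[Proposition~III.2.3]{kub}. This forces scenario~(iii), so $K/\Q$ is Galois, and then Lemma~\ref{galizogenije} (applicable because $n=9$ is allowed there, unlike in Lemma~\ref{izogenije}) gives a rational $9$-isogeny on $E$ itself, reducing everything to the single rank-$0$ curve $y^2=t^3-27$.
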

\begin{proof}
Suppose $E(K)\supset \Z /2 \Z \oplus \Z /10 \Z$. By Lemma \ref{novi}, this implies that $E(\Q )$ has a $5$-torsion point. This implies, by \cite[Table 3.]{kub}, that $E$ has a model as in (\ref{jed5}). Since $E(\Q)$ cannot contain $\Z /2\Z \oplus \Z /10 \Z$, either $E(\Q)[2]=0$ or $\Z/2\Z$. But if $E(\Q)[2]=\Z / 2\Z$, then $E$ would gain full $2$-torsion over a quadratic instead of a cubic field. We conclude that $E(\Q)[2]=0$, and that $E$ gains full $2$-torsion over the cubic field $K$. This happens if and only if the discriminant $\Delta(E)$ is a square in $\Q^\times$, i.e. the equation
$$\Delta(E)=y^2=t^7 - 11t^6 - t^5, \text{ for some } y,t\in \Q^\times$$
has solutions. Dividing out by $t^4$ and by change of variables we get
$$X:A^2=t^3-11t^2-t, \text{ for some }A,t\in \Q^\times.$$
The curve $X$ is an elliptic curve and $X(\Q )\simeq \Z / 2\Z$, where the rational points are $0$ and $(0,0)$. Thus $E(K)\not\supset \Z /2 \Z \oplus \Z /10 \Z$.\\

Suppose $E(K)\supset \Z /2 \Z \oplus \Z /12 \Z$. Also, $E(\Q )$ has to either have a $4$-torsion point or no $2$-torsion by Lemma \ref{2-syl} and Corollary \ref{nema4}.

If $E(\Q)$ had a $4$-torsion point, then by Lemma \ref{2-syl}, $E(\Q)\supset \Z /2 \Z \oplus \Z /4 \Z$. We see that $E(M)$ has a $3$-torsion point by Lemma \ref{nova3tor} and hence by Lemma \ref{spust} it follows that $E/ \Q$ has a 3-isogeny and moreover a $4$-torsion point is in the kernel of a rational 12-isogeny. It follows that $E/ \Q$ has a 12-isogeny and an independent 2-isogeny and now from Lemma \ref{izogzbrajanje} it follows that there exists an elliptic curve over with a 24-isogeny over $\Q$, which is in contradiction with Theorem \ref{izogklas}.

If $E(\Q)$ had trivial $2$-torsion, then $K/ \Q$ would have to be a Galois extension for $E(K)$ to have full $2$-torsion. But then by Corollary \ref{nema4}, $E(K)$ cannot have points of order $4$, which is a contradiction.\\

Suppose $E(K)\supset \Z /2 \Z \oplus \Z /18 \Z$. By Lemma \ref{9izogenija}, $E/ \Q$ has either a 9-isogeny or two independent isogenies of degree 3.

Suppose now that $E(\Q)$ has a $2$-torsion point. Then it follows, by Lemma \ref{2-syl}, that $E(\Q)$ has full $2$-torsion. But an elliptic curve with full $2$-torsion cannot have a 9-isogeny \cite[Table 2.]{kub} nor 2 independent 3-isogenies \cite[Proposition III.2.3]{kub}. Thus it follows that $E(\Q)[2]=0$ and from this that $K/\Q$ is a Galois extension. Now it follows, by Lemma \ref{galizogenije}, that $E/ \Q$ in fact has a 9-isogeny.

By \cite[Appendix]{ing}, since $E$ has a 9-isogeny, it is a twist of an elliptic curve $E_0$ with $j$-invariant as given in (\ref{jed9})
and $\Delta(E_0)=2^{12}3^6(t^3 - 27),$
and since $E$ has to be a twist of $E_0$, it follows that $\Delta(E)=u^{12}\Delta(E_0)$, for some $u\in \Q^\times$. The curve $E$ gains full $2$-torsion over a cubic field only if $\Delta(E)$ is a square, which is equivalent to
$$X:y^2=t^3-27,\ t\in \Q \backslash \{0,3 \}, y\in \Q$$
having a solution. But $X(\Q)=\{0, (3,0)\}$, so there exist no such curves.
\end{proof}

It is shown in Lemmas \ref{2-syl} and Proposition \ref{2-stvaranje2} that $2$-Sylow subgroup of $E(K)_{tors}$ is contained in $\Z/2\Z \oplus \Z/8\Z$, in Lemma \ref{3-syl} that the largest power of $3$ that can divide $|E(K)_{tors}|$ is $9$, and in Lemmas \ref{5tors} and \ref{elim} that no powers of $5,$ $7$ and $13$ (apart from $5,$ $7$ and $13$ themselves) divide $|E(K)_{tors}|$. By \cite{Par1, Par2} and Lemma \ref{11tors} the aforementioned primes are the only ones that can divide $|E(K)_{tors}|$.

Now Lemma \ref{elim}, \ref{15tors} and  \ref{20-24-26-28-36-39tors} and Proposition \ref{21tors} show that $|E(K)_{tors}|$ is divisible by more than one element from the set $\{3,5,7,13\}$ only for the pair $(E,K)$ from Proposition \ref{21tors}. The possible combinations of $2$-Sylow subgroups and $p$-Sylow subgroups for $p=3,5,7,13$ are dealt with in Lemmas \ref{20-24-26-28-36-39tors} and \ref{neciklickators} and Proposition \ref{14besk}.

This completes the proof that the groups that appear as torsion groups of rational elliptic curves over cubic fields are contained in the list (\ref{lista}). Note first that by \cite[Lemma 3.2 a)]{jks}, all of the groups from the list (\ref{rac}) appear infinitely often and the group $\Z /21 \Z$ has already been dealt with in Proposition \ref{21tors}.

Any elliptic curve $E/\Q$ with torsion isomorphic to $\Z /9\Z$ over $\Q$ gains a $2$-torsion point over a cubic field $K$ defined by the cubic polynomial $f(x)$, when $E$ is written in short Weierstrass form $E:y^2=f(x)$. Then by Lemmas \ref{20-24-26-28-36-39tors} and \ref{neciklickators} it follows that $E(K)_{tors}$ does not contain $\Z/36\Z$ or $\Z/2\Z \oplus \Z/18\Z$, respectively, and therefore it follows that $E(K)_{tors}\simeq \Z /18 \Z$.

It remains to prove, for each of the groups $T=\Z /13 \Z$, $\Z /14 \Z$ and $\Z /2 \Z \oplus \Z /14 \Z$, that there exist infinitely many elliptic curves $E$ and cubic fields $K$ such that $E(K)_{tors}\simeq T$.

We will deal with the groups $\Z /14 \Z$ and $\Z /2 \Z \oplus \Z /14 \Z$ simultaneously in the following proposition.

\begin{proposition}
\label{14besk}
There exists infinitely many elliptic curves $E/ \Q$ such that there exists a cubic field $K$ over which $E(K)_{tors} \simeq \Z /14 \Z$ and there exists infinitely many elliptic curves $E/ \Q$ such that there exists a cubic field $K$ over which $E(K)_{tors} \simeq \Z /2 \Z \oplus \Z /14 \Z$.
\end{proposition}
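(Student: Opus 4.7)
The plan is to handle the two torsion groups with a common family. Let $E_t$ denote the Kubert parametric family of rational elliptic curves with $\Z/7\Z$ torsion over $\Q$, given by equation \eqref{jed7} for $t \in \Q \setminus \{0,1\}$, and let $f_{2,t}(x)$ denote its $2$-division polynomial (a cubic in $x$). Since $E_t(\Q) \supset \Z/7\Z$ and Mazur's list \eqref{rac} excludes $\Z/14\Z$ as a rational torsion group, $f_{2,t}$ has no rational root and so is irreducible over $\Q$. Let $K_t = \Q[x]/f_{2,t}(x)$, a cubic field; then $E_t(K_t)$ acquires a $2$-torsion point, so it contains a subgroup isomorphic to $\Z/14\Z$.

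For the group $\Z/14\Z$: the paper's classification of torsion groups over cubic fields (list \eqref{lista} together with the preceding lemmas) leaves only $\Z/14\Z$ or $\Z/2\Z \oplus \Z/14\Z$ as possibilities for $E_t(K_t)_\tors$. The latter would force $f_{2,t}$ to split completely over $K_t$, equivalently $K_t$ to be Galois (cyclic) cubic, equivalently $\mathrm{disc}(f_{2,t}) \in (\Q^\times)^2$. This last condition fails for generic $t \in \Q$, so for infinitely many $t$ we have $E_t(K_t)_\tors \simeq \Z/14\Z$. Since distinct $t$ yield distinct $j$-invariants outside a finite exceptional set, this produces infinitely many non-isomorphic rational elliptic curves with $\Z/14\Z$ torsion over a cubic field.

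For the group $\Z/2\Z \oplus \Z/14\Z$: the natural attempt is to restrict the Kubert family to those $t$ with $\mathrm{disc}(f_{2,t}) \in (\Q^\times)^2$. Using $\Delta(E_t) = t^7(t-1)^7(t^3-8t^2+5t-1)$, this reduces to requiring $t(t-1)(t^3-8t^2+5t-1)$ to be a rational square, defining a genus-$2$ hyperelliptic curve whose rational points are finite by Faltings' theorem; so the Kubert family alone does not suffice. To obtain infinitely many, I would instead parametrize rational elliptic curves with a rational $7$-isogeny (but not necessarily rational $7$-torsion) via $X_0(7)(\Q) \simeq \PP^1(\Q)$. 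For each such $E/\Q$, a specific quadratic twist $E^d$ makes the Galois character on the $7$-isogeny kernel have order $3$, placing a $7$-torsion point over a cyclic cubic field $K_7$. Requiring in addition that $E^d$ have full $2$-torsion over $K_7$ amounts to the splitting field of its $2$-division polynomial coinciding with $K_7$. The main obstacle is to show that this compatibility --- between the cyclic cubic field arising from the $2$-division polynomial and the one arising from the order-$3$ part of the $7$-isogeny character --- holds for an infinite subfamily, which I would attempt to verify by an explicit parametric computation using a Hauptmodul for $X_0(7)$.
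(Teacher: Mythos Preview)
Your treatment of $\Z/14\Z$ matches the paper's: take the Kubert $\Z/7\Z$ family, adjoin a root of the (necessarily irreducible) $2$-division cubic, and exclude $\Z/2\Z\oplus\Z/14\Z$ via the discriminant-square condition. Two small points. First, citing list~\eqref{lista} here is circular, since this proposition is part of its proof; you should cite the individual preceding lemmas, as the paper does with Lemma~\ref{20-24-26-28-36-39tors}. Second, the paper is sharper than your ``generic $t$'': it computes that the genus-$2$ curve $y^2=t(t-1)(t^3-8t^2+5t-1)$ has rank-$0$ Jacobian and only the rational points with $t\in\{0,1\}$, so \emph{every} admissible $t$ yields torsion exactly $\Z/14\Z$.

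For $\Z/2\Z\oplus\Z/14\Z$ there is a genuine gap. You propose to parametrize by $X_0(7)$, twist so the $7$-isogeny character has order $3$, and then force the splitting field of the $2$-division polynomial to coincide with the resulting cyclic cubic $K_7$; but you stop at precisely the hard step, the matching of these two a priori unrelated cubic fields, and ``I would attempt an explicit parametric computation'' is not a proof. The paper takes a completely different route. It invokes the explicit infinite family of Jeon--Kim--Lee \cite[Theorem~4.2]{jkl} of elliptic curves over cubic fields with torsion $\Z/2\Z\oplus\Z/14\Z$, checks that every member has rational $j$-invariant and that the base cubic field is Galois, and then shows each curve is actually defined over $\Q$ (not merely a twist by some $\delta\in K$). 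The key idea for this last step: at most one quadratic twist of a given curve over $K$ can carry a $K$-rational point of order $7$, since two such twists would force full $7$-torsion over a degree-$6$ field, hence over $\Q(\zeta_7)$, contradicting \cite{ms}. Thus $E^\delta$, $E^{\sigma(\delta)}$, $E^{\sigma^2(\delta)}$ are all $K$-isomorphic, so $E^\delta$ is a $\Q$-curve; since $\Q$-curves are defined over $(2,\dots,2)$-extensions of $\Q$ \cite{elk}, a $\Q$-curve over a cubic field is already rational.
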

\begin{proof}
Let $E/ \Q$ be an elliptic curve such that $E(\Q)_{tors} \simeq \Z /7\Z$. By, \cite[Table 3.]{kub}, this curve has the model as given in (\ref{jed7}).
If $E$ is written in short Weierstrass form $y^2=f(x)$, then $E$ gains a point of order 2 over the cubic field $K$ generated by $f$. We will show that $E(K)_{tors} \simeq \Z/14\Z$. By Lemma \ref{20-24-26-28-36-39tors}, $E(K)$ cannot have a point of order 28, so it only remains to show that $E(K)_{tors} \not\simeq \Z /2 \Z \oplus \Z /14 \Z$

If $E$ gained full $2$-torsion over $K$, this would imply that
$$\Delta(E)=t^7(t-1)^7(t^3-8t^2+5t-1)$$
is a square in $\Q$, which is equivalent to
$$X:y^2=t(t-1)(t^3-8t^2+5t-1),$$
having rational points such that $t\not\in \{0,1\}$. But the Jacobian $J$ of $X$ has rank 0 over $\Q$ and it is an easy computation in Magma to show that $$X(\Q)=\{\infty, (0,0),(1,0)\},$$ proving the claim.

To prove that there exist infinitely many curves $E/ \Q$ such that for each curve there exists a cubic field $K$ such that $E(K)_{tors}  \simeq \Z /2 \Z \oplus\Z/14\Z$, we note that every elliptic curve from the infinite family of elliptic curves having torsion $\Z /2 \Z \oplus\Z/14\Z$ over a cubic field from \cite[Theorem 4.2]{jkl} has rational $j$-invariant and the cubic field over which it has $\Z /2 \Z \oplus\Z/14\Z$ torsion has discriminant $(t^6 + 4t^5 + 13t^4 - 40t^3 + 19t^2 + 36t + 31)^2$, which implies that it is Galois. These are lengthy but completely straightforward calculations, and hence we leave them out.

The fact that these curves have rational $j$-invariant does not yet prove that the curves are rational, but just that they are quadratic twists by $\delta\in O_K$ of some rational elliptic curve. We need to prove that in fact $\delta$ is rational.

Let $E_1$ be one of the curves from the family \cite[Theorem 4.2]{jkl} and let $E$ be a rational elliptic curve with the same $j$-invariant and denote $\diam{\sigma}=\Gal(K/ \Q)$. As already noted $E_1=E^\delta$. Let $P\in E^\delta(K)$ be a point of order 7. It follows that $P^\sigma$ is a point of order 7 in $E^{\sigma(\delta)}$ and that $P^{\sigma^2}$ is a point of order 7 in $E^{\sigma^2(\delta)}$.

Now we will show that in a family of quadratic twists over a cubic field $K$ there can be only one elliptic curve with a point of order 7. Suppose that $E_2/K$ and $E_2^d/K$ are quadratic twists by a $d\in K^\times$ which are not $K$-isomorphic, and that both have a point of order 7. Then it follows that
$$E_2(K(\sqrt d))[7] \simeq E_2(K)[7] \oplus E_2^d(K)[7] \supset \Z /7 \Z \oplus \Z /7 \Z,$$
or in other words $E$ has full $7$-torsion over $K(\sqrt d)$. Since $K(\sqrt d)$ has to contain $\zeta_7$ it follows that $K(\sqrt d)=\Q(\zeta_7)$. But elliptic curves over $\Q(\zeta_7)$ cannot have full $7$-torsion (see \cite[Theorem.]{ms}).

Thus it follows that $E^\delta$, $E^{\sigma(\delta)}$ and $E^{\sigma^2(\delta)}$ are all $K$-isomorphic which means that $E^\delta $ is a $\Q$-curve. It is known \cite[Section 2.]{elk} that $\Q$-curves are either rational or defined over a $(2,\ldots,2)$ extension of $\Q$. Hence $E^\delta$ is defined over $\Q$, completing the proof.
\end{proof}

\begin{remark}
\label{qcurve}
In the proof of Proposition \ref{14besk}, once we have proven that $E^\delta$, $E^{\sigma(\delta)}$ and $E^{\sigma^2(\delta)}$ are all $K$-isomorphic, an alternative way of proving that $E^\delta$ is rational, without using $\Q$-curves can be done in the following way.

We can see that $E^\delta$, $E^{\sigma(\delta)}$ and $E^{\sigma^2(\delta)}$ are $K$-isomorphic if and only if
$$ \delta\sigma(\delta), \sigma(\delta)\sigma^2(\delta) \text{ and } \delta\sigma^2(\delta)$$
are all squares in $K$.
But since
$$N_{K/ \Q}(\delta)=\delta\sigma(\delta)\sigma^2(\delta)=k\in \Q,$$
it follows that
$$\delta=ka^2 \text{ for some }k \in \Q \text{ and }a \in K,$$
or in other words that $E_1$ is a rational twist of $E/\Q$, and hence can be defined over $\Q$.
\end{remark}

Note that it is not hard to prove that there exist rational elliptic curves with non-trivial $13$-torsion over cubic fields; a short search in Cremona's tables shows that 147B1 is such a curve. The hard part is proving that there are infinitely many such curves. In fact, 147B1 is the only curve with this property that we found in our (short) search.

Let $\{\pm 1\} \leq \Delta\leq (\Z / N\Z )^\times$ and define the congruence subgroup
$$\Gamma_\Delta=\left\{ \begin{pmatrix} a & b \\ c & d\end{pmatrix}\in \SL_2(\Z) \lvert a \text{ mod } N\in \Delta, N|c \right\}.$$

For $N$ prime, the modular curve $Y_\Delta(N)$ corresponding to $\Delta$ has the following moduli space interpretation: a $F$-rational point on $Y_\Delta(N)$ corresponds to an isomorphism class of pairs $(E,\diam{P})$ of an elliptic curve $E/F$ and a subgroup $\diam{P} \in E(\overline \Q)$ of order $N$ such that every $\sigma \in \Gal(\overline \Q /F)$ acts on $\diam{P}$ as multiplication by some $\alpha(\sigma) \in \Delta$. Let $X_\Delta(N)$ be the compactification of $Y_\Delta(N)$. Note that if $\Delta=(\Z / N\Z )^\times$ then $X_\Delta(N)=X_0(N)$ and if $\Delta=\{\pm 1\}$ then $X_\Delta(N)=X_1(N)$ ($X_{\{\pm 1\}}$ is then defined as the quotient of $\mathbb H$ by $\pm \Gamma_1(N)$, but since the action of $\pm 1$ is trivial on $\mathbb H$, it follows that $X_1(N)=X_{\{\pm 1\}}(N))$. All intermediate curves between $X_1(N)$ and $X_0(N)$ are of the form $X_\Delta(N)$, for some $\Delta$.

We now prove that there are in fact infinitely many rational elliptic curves with non-trivial $13$-torsion over some cubic field.

\begin{proposition}
There exists infinitely many elliptic curves $E/ \Q$ such that there exists a cubic field $K$ such that $E(K)$ has a $13$-torsion point.
\end{proposition}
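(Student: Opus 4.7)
The plan is to use the moduli interpretation of the intermediate modular curve $X_\Delta(13)$ with $\Delta$ the unique index-$2$ subgroup of $(\Z/13\Z)^\times$ containing $\pm 1$, namely $\Delta = \{1,3,4,9,10,12\}$ (the squares mod $13$). By Lemma \ref{izogenije}, any rational elliptic curve $E$ with a $13$-torsion point over a cubic field admits a rational $13$-isogeny, and the Galois character $\chi$ acting on its kernel $C$ has image of order $3$ in $(\Z/13\Z)^\times$, which is contained in $\Delta$. Conversely, if $(E,C)\in X_\Delta(13)(\Q)$ has $\chi$ of order $6$, its fixed field $K_6$ is cyclic sextic with unique quadratic subfield $K_2$, and a direct computation shows $\chi^3=\epsilon_{K_2}$; hence the quadratic twist $E^{K_2}$ has kernel-character $\chi\epsilon_{K_2}$ of order $3$, acquiring $13$-torsion over the cubic subfield $K_3\subset K_6$. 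Orders $1$ and $2$ for $\chi$ are excluded by Mazur's theorem, since they would force rational $13$-torsion on $E$ or on a quadratic twist of $E$.

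Next I would compute $g(X_\Delta(13))=0$ using the standard Shimura genus formula. The index is $[\overline{\SL_2(\Z)}:\overline{\Gamma_\Delta}]=14\cdot 2=28$. The degree-$2$ map $X_\Delta(13)\to X_0(13)$ is unramified at both cusps of $X_0(13)$ (the cuspidal stabilizer generators have upper-left entry $1\in\Delta$), giving $\nu_\infty=4$. There are no elliptic points of order $2$ on $X_\Delta(13)$, since the $\Z[i]$-stabilizer acts on the $13$-isogeny kernel as $\pm 5\notin\Delta$; the two elliptic points of order $3$ on $X_0(13)$ lift unramified (the $\Z[\rho]$-stabilizer acts as $\pm 3\in\Delta$), giving $\nu_3=4$. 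The formula then yields
\[
g \;=\; 1 + \tfrac{28}{12} - \tfrac{4}{3} - \tfrac{4}{2} \;=\; 0.
\]
The elliptic curve $147B1$, which has a $13$-torsion point over a cubic field (verifiable by the division polynomial method), provides a $\Q$-rational point on $X_\Delta(13)$, so $X_\Delta(13)\simeq\PP^1_\Q$.

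Consequently $X_\Delta(13)(\Q)$ is infinite. Excluding the finitely many cusps and CM points, every remaining rational point yields a pair $(E,C)$ with $\chi$ of order $3$ or $6$, and in either case (twisting by $K_2$ when necessary) we obtain a rational elliptic curve with $13$-torsion over a cubic field. Since the composition $X_\Delta(13)\to X_0(13)\to X(1)$ is non-constant, the $j$-invariants of the resulting curves range over an infinite set, producing infinitely many non-isomorphic examples. The main obstacle is the genus computation, which demands careful tracking of cusp widths and elliptic-point stabilizers under the cover $X_\Delta(13)\to X_0(13)$; an incorrect choice of $\Delta$ (e.g., the order-$4$ subgroup $\{1,5,8,12\}$) would give a curve of positive genus and the strategy would fail. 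A secondary subtlety is the verification that $\chi^3=\epsilon_{K_2}$ exactly, needed so that the order-$6$-to-order-$3$ twist step really lands in a cubic subfield of $K_6$ rather than in $K_6$ itself.
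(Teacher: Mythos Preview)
Your proposal is correct and follows the same overall strategy as the paper: both use the intermediate modular curve $X_\Delta(13)$ with $\Delta=\{\pm1,\pm3,\pm4\}$ (the squares mod $13$), show it has genus $0$ with infinitely many rational points, and then argue that each non-cuspidal rational point yields a rational elliptic curve with $13$-torsion over a cubic field after possibly replacing $E$ by a rational quadratic twist.

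The differences are in the details. The paper cites the genus from the literature and invokes a rational cusp for the rational point, whereas you compute the genus via the Shimura formula and use $147B1$; both are fine, though the cusp argument is marginally simpler. The more interesting divergence is in handling the case where the kernel character $\chi$ has order $6$. The paper observes that the $x$-coordinates of the $13$-cycle live in a cubic field $K$, so some twist $E^\delta$ with $\delta\in K$ has $13$-torsion over $K$; it then shows $E^\delta$ is Galois-stable up to $K$-isomorphism, hence a $\Q$-curve, and appeals to Elkies' structure theorem (or the norm computation in Remark~\ref{qcurve}) to conclude $E^\delta$ is defined over $\Q$. Your argument is more direct: since $\chi^3$ is the quadratic character of the unique quadratic subfield $K_2\subset K_6$, twisting by the rational field $K_2$ replaces $\chi$ by $\chi\cdot\epsilon_{K_2}=\chi^4$, which has order $3$; this immediately produces a \emph{rational} twist with $13$-torsion over the cubic subfield $K_3$, bypassing the $\Q$-curve machinery entirely. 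Your route is cleaner on this point; the paper's route has the advantage of reusing the $\Q$-curve argument already developed for Proposition~\ref{14besk}.
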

\begin{proof}
Let $\Delta=\{ \pm 1, \pm 3, \pm 4\}\subset (\Z / 13 \Z)^\times$. Then by \cite[Theorem 1.1. and Table 1.]{jk}, $X_\Delta(13)$ has genus 0. The existence of a $\Q$- rational cusp on $X_\Delta(13)$ now immediately shows that $X_\Delta(13)(\Q)$ (and hence $Y_\Delta(13)(\Q)$) has infinitely many points.

Now let $(E,\diam{R})$, where $E/ \Q$ and $\diam{R}$ is a $13$-cycle of $E$, be a point on $X_\Delta(13)(\Q)$. If every $\sigma\in \Gal(\overline \Q / \Q)$ acts on $\diam{R}$ by multiplication by an element of $(\Z / 13 \Z)^\times$ of order 3, then it would follow that $R$ is defined over a cubic field and we are done.

Suppose the opposite, that $\sigma$ acts on $\diam{R}$ by multiplication as an element of $(\Z / 13 \Z)^\times$ of order 6. It can be seen, however that when $E$ is written in short Weierstrass form,  $\sigma$ actually permutes the three $x$-coordinates of $\pm R, \pm 3R$ and $\pm 4R$ and since $x(T)=x(-T)$ for any $T\in E(\overline \Q)$, this implies that the $x$ coordinates of the points in $\diam{R}$ are defined over a cubic field $K$. Let $F\supset K$ be the field of definition of $R$. If $F=K$ we are done so suppose $F=K(\sqrt \delta)$, for some $\delta \in K$. Then $E(F)$ has a point of order 13 and since
$$E(F)[13]=E(K)[13]+E^\delta(K)[13],$$
it follows that either $E$ or $E^\delta$ have a $K$-rational point of order 13. If $E(K)$ has a point of order $13$ we are done. Suppose that $E^\delta(K)$ has a point of order 13. Since $F=\Q(\diam{R})$, the field $F$ is a Galois extension of $\Q$ with $\Gal(F/\Q)\simeq \Z /6 \Z$ and it follows that $K$ is Galois over $\Q$ with $\Gal(K/\Q)\simeq \Z/3\Z$. Let $\diam{\tau}= \Gal(K/\Q)$.

Using the same argument as in the proof of Proposition \ref{14besk}, one can prove that $E^\delta$, $E^{\tau(\delta)}$ and $E^{\tau^2(\delta)}$ all have to be $K$-isomorphic and hence it follows that $E$ is a $\Q$-curve and it follows that $E^\delta$ has to be defined over $\Q$.

Thus for every rational elliptic curve $E$ represented by a point on $X_\Delta(13)$, there exists a rational twist $E'$ such that there exists a cubic field $K$ with the property that $E'(K)$ has a point of order 13.
\end{proof}

\section{Sporadic points on $X_1(n)$}

As we have seen in Proposition \ref{21tors}, there exists a sporadic point of degree 3 on $X_1(21)$, which is a curve of gonality 4 (the gonality of $X_1(21)$ can be deduced from \cite[Theorem 2.3.]{jks} and \cite[Theorem 0.1.]{jk2}). This point was essentially constructed by starting with an elliptic curve $E/ \Q$ with a $21$-isogeny and then using the division polynomial method to determine the minimal degree of a field $K$ over which the points in the $\Gal(\overline \Q / \Q)$-invariant subgroup of order 21 of some twist of $E$ becomes $K$-rational.

It is a natural question to ask whether the same procedure can be used to find other sporadic points by starting with other rational elliptic curves with isogenies. We have tried this and this method gives us (only) a degree 6 point on $X_1(37)$; we describe the procedure used to find it below.

There are 2 families of twists of elliptic curves with $37$-isogenies. We start with the elliptic curve $E=1225H1$, take a short Weierstrass model
$$y^2 = x^3 - 10395x + 444150$$
of it and factor (over $\Q$) its $37$-division polynomial $\psi_{37}$ finding a degree 6 factor
$$f_6= x^6 - 3150x^5 + 796635x^4 - 75770100x^3 + 3111596775x^2 $$
$$- 44606598750x - 85333003875.$$
This implies that the $x$-coordinate of a point of order $37$ of $E$ is defined over a sextic field $F$ and since twisting does not change the roots of division polynomials, it follows that there exists an unique twist (over $F$) of $E$ such that it has a point of order $37$ over $F$. This can be found simply by finding over which quadratic extension $F(\sqrt{\delta})$ the points of order $37$ become defined, and then the quadratic twist we are looking for is $E^\delta$.
Let $w$ be a root of $f_6$. We compute that
$$\delta=w^3 - 10395w + 444150$$
and that $E^\delta$ indeed has a point of order $37$. Thus $E^\delta$, together with a point of order $37$ represents a sporadic point of degree 6 on $X_1(37)$, which has gonality 18. Note that this is the same curve which has been previously found by van Hoeij using computational methods \cite{hoe}.\\

\textbf{Acknowledgements}.

We thank K\k{e}stutis \v Cesnavi\v cius for finding a mistake in Proposition \ref{2-stvaranje2} in an earlier version and for helping simplify the proofs of Propositions \ref{2-stvaranje} and \ref{2-stvaranje2} and the anonymous referee for many suggestions that greatly improved this paper. We are also grateful to Andew Sutherland for providing useful computational data, and to Andrej Dujella, Enrique Gonz\'alez-Jim\'enez and Petra Tadi\'c for helpful comments.


\begin{thebibliography}{c}
\bibitem{mag}
W.\ Bosma, J.~J.\ Cannon, C.\ Fieker and A.\ Steel (eds.), Handbook of Magma functions, Edition 2.18 (2012),

\bibitem{bru}
N. Bruin, \emph{Chabauty methods using elliptic curves}, J. Reine Angew. Math. \textbf{562} (2003), 27--49.


\bibitem{cla}
P. Clark, P. Corn, S. Lane, A. Rice, J. Stankewicz, N. Walters, S. Winburn and B. Wyser, \emph{Computations on CM elliptic curves}, preprint.

\bibitem{cre}
J. Cremona, Algorithms for Modular Elliptic Curves, 2nd ed. Cambridge University Press, Cambridge, 1997.

\bibitem{diu}
L. Dieulefait, E. Gonz\'alez-Jim\'enez and J. Jim\'enez Urroz,
\emph{On fields of definition of torsion points of elliptic curves with complex multiplication},
Proc. Amer. Math. Soc. \textbf{139} (2011), 1961--1969.

\bibitem{dd}
T. Dokchitser and V. Dokchitser, \emph{Surjectivity of mod $2^n$ representations of elliptic curves}, Math. Z. \textbf{272} (2012), 961--964.

\bibitem{elk}
N. Elkies, \emph{On elliptic $K$-curves}, Progress in Mathematics 224 (2004), 81--91 (Proceedings of the 7/2002 Barcelona Euroconference on "Modular Curves and Abelian Varieties", ed. J. Cremona, J.-C. Lario, J. Quer, and K. Ribet).


\bibitem{fuj2}
Y. Fujita, \emph{Torsion subgroups of elliptic curves in elementary abelian 2-extensions of $\mathbb Q$}, J. Number Theory \textbf{114} (2005), 124--134.

\bibitem{jk2}
D. Jeon and C.~H.~Kim, \emph{Bielliptic modular curves $X_1(N)$}, Acta Arith. \textbf{112} (2004), 75--86.

\bibitem{jk}
D. Jeon and C.~H.~Kim \emph{On the arithmetic of certain modular curves}, Acta Arith. \textbf{130} (2007), 181--193.

\bibitem{jkl}
D. Jeon, C.~H. Kim and Y. Lee,
\emph{Families of elliptic curves over cubic number fields with prescribed torsion subgroups}, Math. Comp. \textbf{80} (2011), 579--591.

\bibitem{jkl4}
D. Jeon, C.~H. Kim and Y. Lee,
\emph{Families of elliptic curves over quartic number fields with prescribed torsion subgroups}, Math. Comp. \textbf{80} (2011), 2395--2410.

\bibitem{jks}
D. Jeon, C.~H. Kim and A. Schweizer, \emph{On the torsion of elliptic curves over cubic number fields}, Acta Arith. \textbf{113} (2004), 291--301.

\bibitem{gt}
E. Gonz\'alez-Jim\'enez and J.~M. Tornero, \textit{Torsion of rational elliptic curves over quadratic fields}, Revista de la Real Academia de Ciencias Exactas, F\'isicas y Naturales. Serie A. Matem\'aticas, to appear.

\bibitem{ing} P. Ingram, \emph{Diophantine analysis and torsion points on elliptic curves,} Proc. London Math. Soc. \textbf{94} (2007), 473--486.

\bibitem{Kam1}
S. Kamienny, \emph{Torsion points on elliptic curves and $q$-coefficients of modular forms}, Invent. Math. \textbf{109} (1992), 221--229.

\bibitem{kn}
S.\ Kamienny and F.\ Najman, \emph{Torsion groups of elliptic curves over quadratic fields}, Acta Arith. \textbf{152} (2012), 291--305.

\bibitem{klo} R. Kloosterman, \emph{Elliptic curves with large Selmer groups},
Master's thesis, University of Groningen, 2001.

\bibitem{ken2}
M.~A.~Kenku, \emph{The modular curves $X_0(65)$ and $X_0(91)$ and rational isogeny}, Math. Proc. Cambridge Philos. Soc. \textbf{87} (1980), 15--20.

\bibitem{ken3}
M.~A.~Kenku, \emph{The modular curve $X_0(169)$ and rational isogeny}, J. London Math. Soc. (2) \textbf{22} (1980),  239--244.

\bibitem{ken4}
M.~A.~Kenku, \emph{On the modular curves $X_0(125)$, $X_1(25)$ and $X_1(49)$},
J. London Math. Soc. (2) \textbf{23} (1981), 415--427.

\bibitem{km}
M.~A.\ Kenku and F.\ Momose, \emph{Torsion points on elliptic curves defined over quadratic
fields}, Nagoya Math.\ J.\ \textbf{109} (1988), 125--149.

\bibitem{kub}
D.~S. Kubert, \emph{Universal bounds on the torsion of elliptic curves}, Proc. London. Math. Soc. \textbf{33} (1976), 193--237.

\bibitem{lol} M. Laska and M. Lorenz, \emph{Rational points on elliptic curves over $\mathbb Q$ in elementary abelian 2-extensions of $\mathbb Q$}, J. Reine Angew. Math. \textbf{355} (1985), 163--172.

\bibitem{alr}
\' A. Lozano-Robledo, \textit{On the field of definition of $p$-torsion points on elliptic curves over the rationals}, Math. Ann. \textbf{357} (2013), 279--305.

\bibitem{maz1}
B. Mazur, \emph{Modular curves and the Eisenstein ideal},  Inst. Hautes \'Etudes Sci. Publ. Math. \textbf{47} (1978),  33--186.

\bibitem{maz2}
B. Mazur, \emph{Rational isogenies of prime degree}, Invent. Math. \textbf{44} (1978), 129--162.

\bibitem{ms}
L. Merel and W.~A.~Stein, \emph{The field generated by the points of small prime order on an elliptic curve}, Internat. Math. Res. Notices \textbf{2001}, 1075--1082.

\bibitem{mes}
J.-F. Mestre, \emph{La m\'etode des graphes}, Proceedings of the international conference on class numbers and fundamental units of algebraic number fields, Katata, 1986.

\bibitem{mod} Modular functions of one variable IV, Edited by B. J. Birch and W. Kuyk. Lecture Notes in Mathematics, Vol. 476. Springer-Verlag, Berlin-New York, 1975.

\bibitem{mom} F. Momose, \emph{$p$-torsion points on elliptic curves defined over quadratic fields}, Nagoya Math. J. \textbf{96} (1984), 139--165.

\bibitem{msz}
H. H. M\" uller, H. Str\" oher and H. G. Zimmer, \emph{Torsion groups of elliptic curves with integral j-invariant over quadratic fields,} J. Reine Angew. Math. \textbf{397} (1989), 100--161.

\bibitem{naj0}
F. Najman, \emph{Complete classification of torsion of elliptic curves over quadratic cyclotomic fields}, J. Number Theory \textbf{130} (2010), 1964--1968.

\bibitem{naj2} F. Najman, \emph{Torsion of elliptic curves over cubic fields}, J. Number Theory \textbf{132} (2012) 26--36.

\bibitem{naj} F. Najman, \emph{Exceptional elliptic curves over quartic fields,} Int. J. Number Theory \textbf{8} (2012), 1231--1246.


\bibitem{Par1}
P. Parent, \emph{Torsion des courbes elliptiques sur les corps cubiques}, Ann. Inst. Fourier \textbf{50} (2000), 723--749.

\bibitem{Par2}
P. Parent, \emph{No 17-torsion on elliptic curves over cubic number fields}, J. Theor. Nombres Bordeaux \textbf{15} (2003), 831--838.

\bibitem{pwz}
A. Peth\H o, T. Weis and H. G. Zimmer, \emph{Torsion groups of elliptic curves with integral j-invariant over general cubic number fields}, Internat. J. Algebra Comput. \textbf{7} (1997), 353--413.


\bibitem{ser}
J.-P. Serre, Linear Representations of Finite Groups, Graduate Texts in Mathematics,
Volume 42, Springer-Verlag, New York, 1977.

\bibitem{hoe}
M. van Hoeij, \emph{Low Degree Places on the Modular Curve $X_1(N)$}, preprint, \url{http://arxiv.org/abs/1202.4355}

\bibitem{was}
L. Washington, Elliptic Curves. Number Theory and Cryptography, Discrete Mathematics and its Applications (Boca Raton), Chapman \& Hall/CRC, Boca Raton, 2003.

\end{thebibliography}
\end{document}